\tikzstyle{subgroup}=[scale=1]
\newtheorem{thm}{Theorem}
\newtheorem{theorem}{Theorem}[section]
\newtheorem{corollary}[theorem]{Corollary}
\newtheorem{proposition}[theorem]{Proposition}
\newtheorem{lemma}[theorem]{Lemma}
\title[A M\"obius function]{A M\"obius function on the Centralizer Lattice}
\author[Cocke]{William Cocke}
\address{School of Computer and Cyber Sciences, Augusta University, Augusta, GA 30912; \newline \indent
wcocke@augusta.edu \\ \newline
\indent And Carnegie Mellon University, Pittsburgh, PA 15213: wcocke@andrew.cmu.edu}
\author[Lewis]{Mark L. Lewis}
\address{Department of Mathematical Sciences, Kent State University, Kent, OH  44242; \newline \indent lewis@math.kent.edu}
\author[McCulloch]{Ryan McCulloch}
\address{Department of Mathematics and Statistics, Binghamton University, Binghamton, NY 13902; \newline \indent rmccullo1985@gmail.com}
\date{}
\begin{document}

\begin{abstract}
%Part group theory, part universal algebra, we explore the centralizer operation on a group. We show that this is a closure operator on the power set of the group and compare it to the well-known closure operator of `subgroup-generated-by'. We investigate properties of the centralizer lattice and consider the question of how generating sets should be addressed in this lattice. The element centralizers (centralizers of a single element) and, dually, their centers, play a fundamental role in the centralizer lattice.  We show that every centralizer is a union of its `centralizer equivalence classes' over the element centers that it contains.  
We consider the M\"obius function on the poset of element centers and obtain some new results regarding centralizers in a $p$-group.
\end{abstract}

\keywords{group theory, centralizer, lattice, M\"obius function, p-group}

\subjclass[2020]{Primary 20E15, 20D15}

\maketitle

\section{Introduction}

In this paper, all groups are finite.  In the study of finite groups, M\"obius functions play an important role in understanding the lattice of subgroups.  So far as we can tell, a M\"obius function on the lattice of subgroups was first defined in 1936 by Phillip Hall in \cite{hall}.  In \cite{gaschutz}, Gasch\"utz implicitly computed the values for the M\"obius function on the lattice of subgroups, and using this work, Kratzer and Th\'evanez in \cite{KrTh} present an explicit version of this formula. 

The paper by Hawkes, Isaacs, and \"Ozaydin \cite{HIO} written in 1989 presents a uniform and elementary treatment of the theory of M\"obius functions on the lattice of subgroups of a group.  We recommend this well-written paper to the reader who wants to get a full understanding of the state of M\"obius functions and groups at that time.

Since \cite{HIO} was written, M\"obius functions on groups have continued to be studied, and we will not try to list all of the papers that have been written.  We will mention just a few.  For example, in \cite{pahlings}, Pahlings shows that the equation that Hawkes, Isaacs, and \"Ozaydin found for the M\"obius function on solvable groups is not always valid for nonsolvable groups.  Pahlings also looks at the connection between the M\"obius function on the lattice of subgroups and the M\"obius function on the poset of conjugacy classes of subgroups.  This relationship is further studied by Dalla Volta and Zini in \cite{DaZi}.  In the recent paper of Dalla Volta and Lucchini \cite{DaLu}, they consider a M\"obius function on the poset of automorphism classes of subgroups of a group. 

In this paper, we define a M\"obius function on the poset of element centers (a subposet of the full centralizer lattice) to obtain results on centralizers in $p$-groups.  %Theorem \ref{thm: Mob} is a general result relating the M\"obius function on this poset to a centralizer in a $p$-group, and we arrive at some new results for $F$-groups (an $F$-group is a group that has one level of element centralizers in the centralizer lattice), see Corollaries \ref{cor: non_ab_F-Gp} \& \ref{cor: cent_graph_degree}.

\begin{thm} \label{intro: Mob}
Let $G$ be a nonabelian $p$-group.  Let $\mu$ be the M\"{o}bius function defined on the poset $\{ {\bf Z}(\mathbf{C}_G(g)) \mid g \in G \}$ under containment.  For each $H \in \mathfrak{C}(G) \setminus \{G\}$, where $\mathfrak{C}(G)$ denotes the centralizer lattice of $G$, we have 
$$\sum_{\substack {H \subseteq \mathbf{C}_G(g), \\ \mathbf{C}_G(g) \subset G}} \mu(\mathbf{Z}(\mathbf{C}_G(g))) \equiv -1 \bmod p.$$
\end{thm}

In \cite{centgraph}, the second and third authors have introduced a graph on the set of centralizers of noncentral elements.  We define the {\it centralizer graph}, $\Gamma_{\mathcal{Z}} (G)$, of $G$ to be the graph whose vertex set is $\mathcal{C} (G) = \{ \mathbf{C}_G (g) \mid g \in G \setminus \mathbf{Z}(G) \}$ and there is an edge between distinct $\mathbf{C}_G (g)$ and $\mathbf{C}_G (h)$ if $\mathbf{Z}(\mathbf{C}_G (h)) \subseteq \mathbf{C}_G (g)$.  In \cite{centgraph}, we show the centralizer graph of $G$ is closely related to the commuting graph which we will define later.  An {\it $F$-group} is a group $G$ where if $\mathbf{C}_G (g), \mathbf{C}_G(h)$ are distinct members of $\mathcal{C} (G)$, we have $\mathbf{C}_G (g)$ does not contain $\mathbf{C}_G (h)$ and $\mathbf{C}_G (h)$ does not contain $\mathbf{C}_G (g)$.  For $F$-groups that are $p$-groups, we obtain the following characterization of their centralizer graphs.

\begin{thm}\label{intro: cent_graph_degree}
Suppose $G$ is a nonabelian $F$-group that is a $p$-group.  Every vertex of the centralizer graph $\Gamma_{\mathcal{Z}}(G)$ has degree congruent to $0$ modulo $p$.
\end{thm}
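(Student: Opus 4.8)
The plan is to fix a noncentral element $g$, set $C=\mathbf{C}_G(g)$ and $Z=\mathbf{Z}(G)$, and count the neighbours of the vertex $\mathbf{C}_G(g)$ by a mod-$p$ partition argument carried out inside $C$. First I would unwind the adjacency relation. Since $h\in\mathbf{Z}(\mathbf{C}_G(h))$ for every $h$, the condition $\mathbf{Z}(\mathbf{C}_G(h))\subseteq\mathbf{C}_G(g)$ forces $h\in\mathbf{C}_G(g)$, i.e.\ $g$ and $h$ commute; conversely, if $g$ and $h$ commute then $g\in\mathbf{C}_G(h)$, so every element of $\mathbf{Z}(\mathbf{C}_G(h))$ commutes with $g$ and $\mathbf{Z}(\mathbf{C}_G(h))\subseteq\mathbf{C}_G(g)$. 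Thus $\mathbf{C}_G(g)$ and $\mathbf{C}_G(h)$ are adjacent precisely when $g$ and $h$ commute and $\mathbf{C}_G(g)\neq\mathbf{C}_G(h)$ (in particular the relation is symmetric, independent of exactly how one reads the defining inclusion), so
\[
\deg\bigl(\mathbf{C}_G(g)\bigr)=\#\{\,\mathbf{C}_G(h):h\in C\setminus Z,\ \mathbf{C}_G(h)\neq C\,\}.
\]
Using the bijection $\mathbf{C}_G(h)\leftrightarrow\mathbf{Z}(\mathbf{C}_G(h))$ between element centralizers and element centers, this equals $N-1$, where $N$ is the number of distinct element centers $\mathbf{Z}(\mathbf{C}_G(h))$ with $h\in C\setminus Z$ and the $-1$ removes $\mathbf{Z}(C)$, which occurs via $h=g$.

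Next I would use the $F$-group hypothesis to organize the noncentral elements of $C$. If $h$ is noncentral and $h\in\mathbf{Z}(\mathbf{C}_G(d))$ for some noncentral $d$, then $\mathbf{C}_G(d)\subseteq\mathbf{C}_G(h)$; as $G$ is an $F$-group and $\mathbf{C}_G(d),\mathbf{C}_G(h)\in\mathcal{C}(G)$, this forces $\mathbf{C}_G(d)=\mathbf{C}_G(h)$. Hence every noncentral $h\in G$ lies in exactly one element center, namely $\mathbf{Z}(\mathbf{C}_G(h))$, and the fibre of $h\mapsto\mathbf{Z}(\mathbf{C}_G(h))$ over an element center $E$ is exactly $E\setminus Z$. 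Intersecting with $C$ (and using $Z\subseteq E\cap C$, since $Z\subseteq\mathbf{Z}(\mathbf{C}_G(d))$ for any $d$ and $Z\subseteq C$) gives the disjoint decomposition
\[
C\setminus Z=\bigsqcup_{E}\bigl((E\cap C)\setminus Z\bigr),
\]
the union running over the $N$ element centers $E$ with $E\cap C\supsetneq Z$, i.e.\ exactly those counted in the first paragraph. (This decomposition can also be read off from Theorem~\ref{intro: z_stars_fin} applied with $H=C$.)

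Finally comes the counting modulo $p$. Taking orders in the decomposition and dividing through by $|Z|$ --- legitimate because $Z\le E\cap C\le C$ are $p$-groups --- yields
\[
|C:Z|-1=\sum_{E}\bigl(|E\cap C:Z|-1\bigr),
\]
a sum of $N$ terms. Since $g$ is noncentral, $C\supsetneq Z$, so $|C:Z|$ is a positive power of $p$ and the left-hand side is $\equiv-1\pmod p$; since each $E$ in the sum has $E\cap C\supsetneq Z$, each index $|E\cap C:Z|$ is a positive power of $p$ and each summand is $\equiv-1\pmod p$. Therefore $-1\equiv-N\pmod p$, so $N\equiv1\pmod p$ and $\deg(\mathbf{C}_G(g))=N-1\equiv0\pmod p$. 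Equivalently, one may run the whole computation in $G/\mathbf{Z}(G)$.

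The step I expect to be the real obstacle is this last one, and the difficulty is conceptual rather than computational: the naive attempt of summing the \emph{sizes} $|(E\cap C)\setminus Z|=|E\cap C|-|Z|$ is useless, since each such size is divisible by $p$ (as $\mathbf{Z}(G)\neq1$), so it only reproduces the trivial congruence $0\equiv0$ and reveals nothing about the number $N$ of parts. The key is to divide out by $|\mathbf{Z}(G)|$ first, so that each part contributes $-1$ instead of $0$ modulo $p$ and the part count becomes visible. Everything else --- identifying adjacency with commuting and using the $F$-group condition to make ``each noncentral element lies in a unique element center'' hold --- is routine once that point is seen.
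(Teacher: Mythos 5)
Your proof is correct, but it reaches the statement by a more direct route than the paper does. The paper obtains this theorem as a corollary of a general M\"obius-function result: it defines $\mu$ on the poset of element centers, proves by induction that $|\mathbf{Z}^*(g)|/|\mathbf{Z}(G)| \equiv \mu(\mathbf{Z}(g)) \bmod p$ (Proposition \ref{prop: p-Mob}) using the partition of any centralizer into the classes $\mathbf{Z}^*(g_i)$ together with $\mathbf{Z}(G)$ (Corollary \ref{cor: z_stars_fin}), deduces that the relevant sums of $\mu$ are $\equiv -1 \bmod p$ (Theorem \ref{thm: Mob}), and then specializes to $F$-groups, where $\mu(Z) = -1$ for every noncentral element center, so the number of noncentral element centers inside an element centralizer is $\equiv 1 \bmod p$ (Corollary \ref{cor: non_ab_F-Gp}); the degree of a vertex is that count minus one. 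You bypass the M\"obius machinery entirely: the $F$-group hypothesis gives that every noncentral element lies in a unique element center, so the centers $E$ with $E \cap \mathbf{C}_G(g) \supsetneq \mathbf{Z}(G)$ partition $\mathbf{C}_G(g) \setminus \mathbf{Z}(G)$, and your divide-by-$|\mathbf{Z}(G)|$ count yields $N \equiv 1 \bmod p$ directly. This is exactly the $F$-group specialization of the paper's engine --- in an $F$-group one has $\mathbf{Z}^*(h) = \mathbf{Z}(h) \setminus \mathbf{Z}(G)$, which is precisely why each class contributes $-1$ and why $\mu \equiv -1$ in the paper's framework --- so what you gain is a self-contained, more elementary argument (plus a clean reduction of adjacency to ``$g$ and $h$ commute with $\mathbf{C}_G(g) \neq \mathbf{C}_G(h)$''), and what you lose is generality: your computation does not recover Theorem \ref{thm: Mob} for arbitrary nonabelian $p$-groups, where element centers can be nested and the inductive M\"obius bookkeeping is genuinely needed.
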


We have a number of examples of $p$-groups that are not $F$-groups where the vertices of the centralizer graph do not have degrees that are all congruent to the same value modulo $p$.

The paper proceeds as follows. In Section \ref{sec: basic} we recall some of the basic properties of the centralizer. Some of these properties are less well-publicized, and the section contains proofs of them.  Section \ref{sec: op} contains our material on $\mathbf{C}_G(\cdot)$ as an operator and on the centralizer lattice.  In Sections \ref{sec: elems} and \ref{sec: Moby} we define the equivalence relation and present our $p$-group results using the M\"obius function.

\section{Basic properties of centralizers}\label{sec: basic}

In this section, we give some background on centralizers.  We then state and prove some of the well-known properties of centralizers.  We use the bracket notation for the `subgroup-generated-by' and the union symbol for the set-theoretic union. Since our goal is to investigate properties of the centralizer as a map inside a fixed group, i.e., $\mathbf{C}_G(\cdot)$ for a group $G$, we explore how the centralizer behaves with respect to collections of subgroups of $G$.

The centralizer of a subset is a standard topic in introductory group theory courses, and it has also enjoyed great study in the literature.  The study of the lattice of all centralizers dates back to Schmidt, see \cite{sch70}, \cite{sch_book}.  Properties of centralizers of sets have been studied in, for example, \cite{bry}, \cite{dun}, and \cite{tre}.  Researchers also consider how centralizers of sets affect the structure of the group, see for example \cite{coc}, \cite{fal}, and \cite{del}.  Sections \ref{sec: basic} \& \ref{sec: op} of this manuscript outline the properties of the centralizer lattice of a group and the centralizer operator on subgroups of a group.  Many of these results are ``mathematical folklore'', and thus known to experts, and may extend beyond groups to non-commutative associative algebras.  Our treatment is clear and unified, restricted to the class of all groups, and may be a valuable reference for researchers in group theory.

In many areas of the literature, the term centralizer is used to refer to the centralizer of an element in a group (for us we will make the distinction between the centralizer of a set and centralizer of one element).  The literature is full of results on centralizers of elements, for example: numbers of centralizers (\cite{kho_21}), coverings by centralizers (\cite{HaAm}, \cite{cover}), two elements having the same centralizer (\cite{rah_25}), groups with centralizers of the same size (\cite{DHJ}).  In Section \ref{sec: elems}, we consider the element centralizers and their centers, and the role these centralizers play in the centralizer lattice.

Given a subset $S$ of a group $G$, define the {\it centralizer} of $S$ in $G$, written $\mathbf{C}_G(S)$ as the set $\{ x \in G \mid xs = sx \,\, \forall s \in S \}$. A standard exercise is to show that the set $\mathbf{C}_G(S)$ is actually a subgroup for any subset $S$ of $G$. Another standard operator involving subsets of a group is the `subset-generated-by' operator $\langle \cdot \rangle$ where $\langle S \rangle$ is the subgroup of $G$ generated by $S$.

Given a subset $S$ of a group $G$, define $\mathbf{C}_G(S) = \{ x \in G \,\, | \,\, xs = sx \,\, \forall s \in S \}$.  Note that $\mathbf{C}_G (\emptyset) = G$. When $S = \{ s \}$, we write $\mathbf{C}_G(\{ s \}) = \mathbf{C}_G(s)$.  We call centralizers of the form $\mathbf{C}_G(s)$ \textit{element centralizers}, a slightly shortened version of centralizers of elements.

\begin{lemma} \label{lem: contain}
Let $G$ be a group and $S,T$ subsets of $G$.  We have $$S \subseteq T \implies \mathbf{C}_G(T) \subseteq \mathbf{C}_G(S).$$
\end{lemma}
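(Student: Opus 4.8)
The plan is to prove this by a direct element-chase straight from the definition of the centralizer, since the assertion is precisely the contravariance of $\mathbf{C}_G(\cdot)$ with respect to inclusion. Assume $S \subseteq T$, and let $x \in \mathbf{C}_G(T)$ be arbitrary. By the defining condition, $xt = tx$ for every $t \in T$. Because every element $s \in S$ is in particular an element of $T$, the same equality $xs = sx$ holds for every $s \in S$, which is exactly the condition for membership in $\mathbf{C}_G(S)$. Hence $x \in \mathbf{C}_G(S)$, and since $x$ was arbitrary we conclude $\mathbf{C}_G(T) \subseteq \mathbf{C}_G(S)$.

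There is no real obstacle here: the inclusion-reversal is immediate once the universally-quantified defining condition is unwound. The only point worth a sentence is the degenerate case $S = \emptyset$, where $\mathbf{C}_G(S) = G$ by the convention $\mathbf{C}_G(\emptyset) = G$ recorded just above the statement; the inclusion $\mathbf{C}_G(T) \subseteq G$ then holds trivially, so the lemma is consistent with that convention. This one-line argument is also the template for the richer monotonicity and closure-operator properties of $\mathbf{C}_G(\mathbf{C}_G(\cdot))$ developed in Section \ref{sec: op}, so it is natural to isolate it first.
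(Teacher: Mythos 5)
Your proof is correct and is essentially identical to the paper's own argument: both unwind the definition of the centralizer and use $S \subseteq T$ to pass from the universally quantified condition over $T$ to the one over $S$. The extra remark about the empty-set convention is harmless but not needed.
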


\begin{proof}
Suppose $S \subseteq T$.  We have
$$x \in \mathbf{C}_G(T) \iff xg = gx \,\, \forall g \in T \implies xg = gx \,\, \forall g \in S \iff x \in \mathbf{C}_G(S).$$
\end{proof}

\begin{proposition} \label{prop: cent_union_int_1}
Let $G$ be a group and let $\mathfrak{S}$ be a nonempty collection of subsets of $G$.  
\begin{enumerate}
\item $\displaystyle \mathbf{C}_G (\bigcup_{S \in \mathfrak{S}} S) = \bigcap_{S \in \mathfrak{S}} \mathbf{C}_G(S).$
\item $\displaystyle \bigcup_{S \in \mathfrak{S}} \mathbf{C}_G(S)\subseteq \mathbf{C}_G (\bigcap_{S \in \mathfrak{S}} S).$
\end{enumerate}
\end{proposition}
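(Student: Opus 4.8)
The plan is to prove both parts directly from the definition of centralizer together with Lemma \ref{lem: contain}, which handles containments cheaply. For part (1), I would use a double inclusion, but in fact each inclusion is a quick set-membership chase: an element $x$ lies in $\mathbf{C}_G(\bigcup_{S \in \mathfrak{S}} S)$ if and only if $xs = sx$ for every $s$ in $\bigcup_{S \in \mathfrak{S}} S$, which is logically equivalent to the statement that for every $S \in \mathfrak{S}$ and every $s \in S$ we have $xs = sx$, i.e.\ $x \in \mathbf{C}_G(S)$ for every $S \in \mathfrak{S}$, i.e.\ $x \in \bigcap_{S \in \mathfrak{S}} \mathbf{C}_G(S)$. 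So part (1) is really just the observation that ``commutes with every element of a union'' unpacks to ``commutes with every element of each set in the family.'' Alternatively, one can derive it from Lemma \ref{lem: contain}: each $S \subseteq \bigcup_{S \in \mathfrak{S}} S$ gives $\mathbf{C}_G(\bigcup S) \subseteq \mathbf{C}_G(S)$ hence $\mathbf{C}_G(\bigcup S) \subseteq \bigcap_S \mathbf{C}_G(S)$, and the reverse inclusion is the membership chase above; I would probably present the clean ``iff'' version.

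For part (2), I would argue from Lemma \ref{lem: contain} applied to each $S$ in the family: for every $S \in \mathfrak{S}$ we have $\bigcap_{T \in \mathfrak{S}} T \subseteq S$, so by Lemma \ref{lem: contain} (with the roles ``$S$'' $=\bigcap T$ and ``$T$'' $= S$) we get $\mathbf{C}_G(S) \subseteq \mathbf{C}_G(\bigcap_{T \in \mathfrak{S}} T)$. Since this holds for every $S \in \mathfrak{S}$, the union $\bigcup_{S \in \mathfrak{S}} \mathbf{C}_G(S)$ is contained in $\mathbf{C}_G(\bigcap_{T \in \mathfrak{S}} T)$. That is the whole argument. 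It is worth remarking (though the proposition does not ask for it) that the inclusion in (2) can be strict, and that $\bigcup_{S} \mathbf{C}_G(S)$ is generally not even a subgroup, which is part of why one only gets a containment here rather than equality; I might add a sentence to that effect.

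I do not anticipate any real obstacle: both parts are essentially immediate from the definitions, with part (1) being a biconditional unpacking of quantifiers over a union and part (2) being a one-line application of the already-proved Lemma \ref{lem: contain}. The only thing to be slightly careful about is the nonemptiness hypothesis on $\mathfrak{S}$ — it is not strictly needed for the inclusions to hold in the usual conventions, but it avoids the degenerate discussion of empty intersections, and I would simply keep it as stated. If anything deserves a second look it is making sure the quantifier manipulation in (1) is stated as a genuine equivalence at each step so that the two inclusions both fall out at once rather than being argued separately.
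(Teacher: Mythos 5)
Your proof is correct and essentially identical to the paper's: part (1) is the same quantifier-unpacking membership chase stated as a chain of equivalences, and part (2) is the same application of Lemma \ref{lem: contain} using $\bigcap_{T \in \mathfrak{S}} T \subseteq S$ for each $S \in \mathfrak{S}$ (the paper phrases it elementwise, picking $S'$ with $x \in \mathbf{C}_G(S')$, but the argument is the same). No gaps.
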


\begin{proof}
We have
\begin{align*}
x \in \mathbf{C}_G (\bigcup_{S \in \mathfrak{S}} S) \iff \\
xg = gx \,\,  \forall g \in \bigcup_{S \in \mathfrak{S}} S \iff \\
\forall \,\, S \in \mathfrak{S}, xg = gx \,\,  \forall g \in S \iff \\
\forall \,\, S \in \mathfrak{S}, x \in \mathbf{C}_G(S) \iff \\
x \in \bigcap_{S \in \mathfrak{S}} \mathbf{C}_G(S).
\end{align*}

We have
\begin{flalign*}
 x \in \bigcup_{S \in \mathfrak{S}} \mathbf{C}_G(S) \iff \\
\exists \,\, S' \in \mathfrak{S}, x \in \mathbf{C}_G(S') \implies \\
 x \in \mathbf{C}_G (\bigcap_{S \in \mathfrak{S}} S),
\end{flalign*}
since $\displaystyle \bigcap_{S \in \mathfrak{S}} S \subseteq S'$ for each $S' \in \mathfrak{S}$, and Lemma \ref{lem: contain} applies.
\end{proof}

\begin{lemma} \label{lem: sub}
Let $G$ be a group and let $S$ be a subset of $G$.  We have $\mathbf{C}_G(S)$ is a subgroup of $G$ and we have $\mathbf{C}_G(S) = \mathbf{C}_G(\langle S \rangle)$.
\end{lemma}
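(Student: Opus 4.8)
The plan is to treat the two assertions in turn, leaning on the results already in hand. For the claim that $\mathbf{C}_G(S)$ is a subgroup, I would first record the special case of a single element: for $s \in G$, the set $\mathbf{C}_G(s)$ is exactly the stabilizer of $s$ under the conjugation action of $G$ on itself, and point stabilizers of a group action are always subgroups. (One may equally check this by hand: $1 \in \mathbf{C}_G(s)$; if $xs = sx$ and $ys = sy$ then $(xy)s = s(xy)$; and $xs = sx$ implies $x^{-1}s = sx^{-1}$.) If $S \neq \emptyset$, I then apply Proposition \ref{prop: cent_union_int_1}(1) to the family $\mathfrak{S} = \{\,\{s\} : s \in S\,\}$ to get $\mathbf{C}_G(S) = \bigcap_{s \in S} \mathbf{C}_G(s)$, an intersection of subgroups and hence a subgroup. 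The degenerate case $S = \emptyset$ is immediate, since $\mathbf{C}_G(\emptyset) = G$.

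For the equality $\mathbf{C}_G(S) = \mathbf{C}_G(\langle S \rangle)$, one inclusion is free: since $S \subseteq \langle S \rangle$, Lemma \ref{lem: contain} gives $\mathbf{C}_G(\langle S \rangle) \subseteq \mathbf{C}_G(S)$. For the reverse inclusion I would fix $x \in \mathbf{C}_G(S)$ and exploit the symmetry of the commuting relation: saying $x \in \mathbf{C}_G(S)$ is the same as saying $S \subseteq \mathbf{C}_G(x)$. By the first part of the lemma applied to the singleton $\{x\}$, the set $\mathbf{C}_G(x)$ is a subgroup of $G$, hence it contains the subgroup generated by any subset of it; in particular $\langle S \rangle \subseteq \mathbf{C}_G(x)$. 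Reading this back through the symmetry of commuting, every element of $\langle S \rangle$ commutes with $x$, i.e. $x \in \mathbf{C}_G(\langle S \rangle)$. Thus $\mathbf{C}_G(S) \subseteq \mathbf{C}_G(\langle S \rangle)$ and the two sets coincide (and, as a sanity check, when $S = \emptyset$ both sides equal $G$ since $\langle \emptyset \rangle = \{1\}$).

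There is no real obstacle here, as this is a foundational lemma; the only thing to be careful about is the order of the argument, so as not to be circular: the ``subgroup'' half must be established first and independently, so that it can be invoked for $\mathbf{C}_G(x)$ in the second half. If one wished to sidestep even that dependence, the second half can be done directly: an arbitrary element of $\langle S \rangle$ is a finite product $s_1^{\epsilon_1}\cdots s_k^{\epsilon_k}$ with each $s_i \in S$ and $\epsilon_i \in \{\pm 1\}$, and a short induction on $k$ (using $xs = sx \Rightarrow xs^{-1} = s^{-1}x$) shows that any $x \in \mathbf{C}_G(S)$ commutes with it.
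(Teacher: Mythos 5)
Your proof is correct, but it follows a different route from the paper's. The paper verifies both halves by direct element-wise computation: it checks $1 \in \mathbf{C}_G(S)$, closure under inverses, and closure under products (the $xy^{-1}$ test) for the subgroup claim, and then proves $\mathbf{C}_G(S) \subseteq \mathbf{C}_G(\langle S \rangle)$ by an induction on word length, noting that commuting with $g$ passes to inverses and products --- essentially the fallback argument you sketch in your last paragraph. Your main argument is structurally slicker: you obtain the subgroup property by writing $\mathbf{C}_G(S) = \bigcap_{s \in S}\mathbf{C}_G(s)$ via Proposition \ref{prop: cent_union_int_1}(1) applied to the singletons (with the $S=\emptyset$ case handled separately, as needed since that proposition assumes a nonempty family), each $\mathbf{C}_G(s)$ being a point stabilizer under conjugation; and you get the reverse inclusion from the symmetry $x \in \mathbf{C}_G(S) \iff S \subseteq \mathbf{C}_G(x)$ together with the fact that the subgroup $\mathbf{C}_G(x)$ must then contain $\langle S \rangle$. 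This is exactly the Galois-connection symmetry the paper only formalizes later in Proposition \ref{prop: Galois}, so your argument anticipates that viewpoint and reuses earlier results, whereas the paper's proof is more elementary and self-contained (it deliberately avoids invoking the subgroup property of $\mathbf{C}_G(x)$ in the second half). You are also right to flag the ordering issue: your dependence chain (subgroup claim first, then the symmetry step for the equality) is non-circular, and all cited results precede the lemma in the paper.
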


\begin{proof}
Note that $1 s = s = s 1$ for all $s \in S$, and so $1 \in \mathbf{C}_G(S)$.  Let $x,y \in \mathbf{C}_G(S)$.  Since $y  s = s  y$ for all $s \in S$, by multiplying on the right and on the left by $y^{-1}$, we obtain $s  y^{-1} = y^{-1}  s$ for all $s \in S$.  Thus $y^{-1} \in \mathbf{C}_G(S)$.  Now, $x  y^{-1}  s = x  s  y^{-1} = s  x  y^{-1}$ for all $s \in S$.  Hence, $x y^{-1} \in \mathbf{C}_G(S)$ and it follows that $\mathbf{C}_G(S)$ is a subgroup of $G$.

Since $S \subseteq \langle S \rangle $, it follows by Lemma \ref{lem: contain} that $\mathbf{C}_G(\langle S \rangle) \subseteq \mathbf{C}_G(S)$.  If $S=\emptyset$, then $G = \mathbf{C}_G(\{ 1 \}) = \mathbf{C}_G(\langle \emptyset \rangle) = \mathbf{C}_G(\emptyset)$, so suppose $S$ is nonempty.  Note that if $gx = xg$ for elements $g,x \in G$, then multiplying on the right and on the left by $x^{-1}$, we obtain $x^{-1}g = g x^{-1}$.  Furthermore, note that if $gx = xg$ and $gy = yg$ for elements $g,x,y \in G$, then we have $gxy = xgy = xyg$.  It follows inductively that $$g \in \mathbf{C}_G(S) \implies g \in \mathbf{C}_G(\langle S \rangle),$$ since an element of $\langle S \rangle$ is a product consisting of elements of $S$ and inverses of elements of $S$.  Hence we have $\mathbf{C}_G(S) = \mathbf{C}_G(\langle S \rangle)$.
\end{proof}
 
\begin{proposition} \label{prop: cent_union_int}
Let $G$ be a group and let $\mathfrak{S}$ be a nonempty collection of subsets of $G$.
\begin{enumerate}
\item $\displaystyle \mathbf{C}_G (\bigcup_{S \in \mathfrak{S}} S) = \mathbf{C}_G (\langle \bigcup_{S \in \mathfrak{S}} S \rangle) = \bigcap_{S \in \mathfrak{S}} \mathbf{C}_G(S).$
\item $\displaystyle \bigcup_{S \in \mathfrak{S}} \mathbf{C}_G(S)\subseteq \langle \bigcup_{S \in \mathfrak{S}} \mathbf{C}_G(S) \rangle \subseteq \mathbf{C}_G (\bigcap_{S \in \mathfrak{S}} S).$
\end{enumerate}
\end{proposition}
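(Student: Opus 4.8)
The plan is to bootstrap directly off of Proposition~\ref{prop: cent_union_int_1} and Lemma~\ref{lem: sub}; essentially no new work is required, since this statement is just the ``subgroup-closure'' refinement of Proposition~\ref{prop: cent_union_int_1}. For part~(1), I would start from the equality already established in Proposition~\ref{prop: cent_union_int_1}(1), namely $\mathbf{C}_G(\bigcup_{S \in \mathfrak{S}} S) = \bigcap_{S \in \mathfrak{S}} \mathbf{C}_G(S)$, and then insert the middle term by invoking Lemma~\ref{lem: sub} with the single subset $T = \bigcup_{S \in \mathfrak{S}} S$. That lemma gives $\mathbf{C}_G(T) = \mathbf{C}_G(\langle T \rangle)$, which is exactly the claimed equality $\mathbf{C}_G(\bigcup_{S \in \mathfrak{S}} S) = \mathbf{C}_G(\langle \bigcup_{S \in \mathfrak{S}} S \rangle)$.

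For part~(2), the first inclusion $\bigcup_{S \in \mathfrak{S}} \mathbf{C}_G(S) \subseteq \langle \bigcup_{S \in \mathfrak{S}} \mathbf{C}_G(S) \rangle$ is immediate, since any subset of $G$ lies inside the subgroup it generates. For the second inclusion, I would recall from Proposition~\ref{prop: cent_union_int_1}(2) that $\bigcup_{S \in \mathfrak{S}} \mathbf{C}_G(S) \subseteq \mathbf{C}_G(\bigcap_{S \in \mathfrak{S}} S)$, and then observe, via Lemma~\ref{lem: sub}, that $\mathbf{C}_G(\bigcap_{S \in \mathfrak{S}} S)$ is a subgroup of $G$. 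Since the subgroup generated by a set is the smallest subgroup containing it, the containment of the set $\bigcup_{S \in \mathfrak{S}} \mathbf{C}_G(S)$ in the subgroup $\mathbf{C}_G(\bigcap_{S \in \mathfrak{S}} S)$ upgrades automatically to $\langle \bigcup_{S \in \mathfrak{S}} \mathbf{C}_G(S) \rangle \subseteq \mathbf{C}_G(\bigcap_{S \in \mathfrak{S}} S)$.

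There is no genuine obstacle here: each new claim follows in one line from a result already proved, and the argument is really just the observation that $\mathbf{C}_G(\cdot)$ lands in subgroups and is insensitive to replacing its argument by the subgroup it generates. The only point requiring a moment's care is making sure Lemma~\ref{lem: sub} is applied to the correct single subset --- the union in part~(1), the intersection in part~(2) --- rather than to an individual member $S \in \mathfrak{S}$, but this is purely bookkeeping.
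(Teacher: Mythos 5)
Your proposal is correct and matches the paper's proof essentially verbatim: part (1) is Proposition \ref{prop: cent_union_int_1}(1) plus Lemma \ref{lem: sub} applied to the union, and part (2) follows because $\mathbf{C}_G(\bigcap_{S \in \mathfrak{S}} S)$ is a subgroup containing $\bigcup_{S \in \mathfrak{S}} \mathbf{C}_G(S)$, so it contains the subgroup that set generates. No gaps.
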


\begin{proof}
Much of this has been proven in Proposition \ref{prop: cent_union_int_1}.

It follows by Lemma \ref{lem: sub} that $\displaystyle \mathbf{C}_G (\bigcup_{S \in \mathfrak{S}} S) = \mathbf{C}_G (\langle \bigcup_{S \in \mathfrak{S}} S \rangle)$.

Since $\displaystyle \mathbf{C}_G (\bigcap_{S \in \mathfrak{S}} S)$ is a subgroup of $G$ that contains the set $\displaystyle \bigcup_{S \in \mathfrak{S}} \mathbf{C}_G(S)$, we have $\displaystyle \langle \bigcup_{S \in \mathfrak{S}} \mathbf{C}_G(S) \rangle \subseteq \mathbf{C}_G (\bigcap_{S \in \mathfrak{S}} S)$.
\end{proof}

We give an example having strict containments in Proposition \ref{prop: cent_union_int} (2).  Let $G = S_4$ be the symmetric group on $\{1,2,3,4\}$, and let $$\mathfrak{S} = \left\{ \left\{ (1,2,3) \right\}, \left\{ (1,2)(3,4),(1,3)(2,4) \right\} \right\}.$$  We have \begin{flalign*}
& \bigcap_{S\in \mathfrak{S}} =  \left\{ (1,2,3) \right\} \bigcap  \left\{ (1,2)(3,4),(1,3)(2,4) \right\} = \emptyset, \\ 
&\mathbf{C}_G((1,2,3))  = \left\langle (1,2,3) \right\rangle, \\ \text{ and } \\
&\mathbf{C}_G(\{ (1,2)(3,4), (1,3)(2,4) \}) = \left\langle (1,2)(3,4), (1,3)(2,4) \right\rangle.\end{flalign*} 

Observe 
$$\underbrace{\left\langle (1,2,3) \right\rangle}_{\mathbf{C}_G\left((1,2,3)\right)} \bigcup \underbrace{\left\langle (1,2)(3,4),(1,3)(2,4) \right\rangle}_{\mathbf{C}_G\left(\left\{(1,2)(3,4),(1,3)(2,4)\right\}\right)}\subset \underbrace{\left\langle \left\langle (1,2,3) \right\rangle \cup \left\langle (1,2)(3,4),(1,3)(2,4) \right\rangle \right\rangle }_{A_4} \subset \underbrace{\mathbf{C}_G(\emptyset)}_{G}.$$

Thus, the union of centralizers need not be a subgroup, nor generate a centralizer. In the next section we will discuss how the join operation should be defined on two centralizers to ensure that the output is also a centralizer. Before we can do so, we will discuss the operation $\mathbf{C}_G(\mathbf{C}_G(\cdot))$ in the context of Galois connections.

\section{The map \texorpdfstring{$\mathbf{C}_G (\cdot)$}{C} and the centralizer lattice}\label{sec: op}

In this section, we explore the centralizer operator $\mathbf{C}_G(\cdot)$ on $\mathcal{P}(G)$ the power set of a group $G$. One of our main observations is that $\mathbf{C}_G(\mathbf{C}_G(\cdot))$ is a \emph{closure operator} on $\mathcal{P}(G)$.  We compare this closure operator to the well-known closure operator $\langle \cdot \rangle$. Just as the subgroup lattice represents the fixed points of $\langle \cdot \rangle$, the centralizer lattice consists of the fixed points of $\mathbf{C}_G(\mathbf{C}_G(\cdot))$.  We state our observations in the following theorem.

\begin{thm}
Let $G$ be a group and let $\mathfrak{C} (G) = \{ {\bf C}_G (H) \mid H \in \mathcal{P} (G) \}$, then the following are true:
\begin{enumerate}
\item $\mathfrak{C} (G)$ is a lattice.
\item The map ${\bf C}_G (\cdot): \mathfrak {C} (G) \rightarrow \mathfrak {C} (G)$ is an inclusion reversing bijection.
\item ${\bf C}_G ({\bf C}_G (H)) = H$ for all $H \in \mathfrak{C} (G)$.
\end{enumerate}
\end{thm}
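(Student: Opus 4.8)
The plan is to deduce all three parts from the single structural fact that the pair $(\mathbf{C}_G, \mathbf{C}_G)$ is a Galois connection on $\mathcal{P}(G)$ ordered by inclusion, equivalently that $\mathbf{C}_G(\mathbf{C}_G(\cdot))$ is a closure operator on $\mathcal{P}(G)$ (Section~\ref{sec: op}). Everything rests on two elementary observations, both immediate from the definition of $\mathbf{C}_G$: (i) $\mathbf{C}_G$ is inclusion-reversing, i.e. $A \subseteq B$ implies $\mathbf{C}_G(B) \subseteq \mathbf{C}_G(A)$; and (ii) $A \subseteq \mathbf{C}_G(\mathbf{C}_G(A))$ for every $A \in \mathcal{P}(G)$, since every element of $A$ commutes with every element of $\mathbf{C}_G(A)$.

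The first real step is the identity $\mathbf{C}_G(\mathbf{C}_G(\mathbf{C}_G(A))) = \mathbf{C}_G(A)$ for all $A$: applying (ii) to the set $\mathbf{C}_G(A)$ gives one containment, while applying $\mathbf{C}_G$ to the containment in (ii) and invoking (i) gives the reverse one. Part (3) is then immediate, since each $H \in \mathfrak{C}(G)$ has the form $H = \mathbf{C}_G(A)$. The same identity also yields that $\mathbf{C}_G \circ \mathbf{C}_G$ is idempotent, and together with (i) and (ii) this confirms it is a closure operator, in case that has not already been recorded.

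Part (2) is then formal. The map $\mathbf{C}_G$ carries $\mathfrak{C}(G)$ into $\mathfrak{C}(G)$ because $\mathbf{C}_G(H)$ is the centralizer of a subset of $G$, hence a member of $\mathfrak{C}(G)$, for any $H$; it is inclusion-reversing by (i); and by part (3) its restriction to $\mathfrak{C}(G)$ is an involution, so it is its own two-sided inverse and therefore a bijection.

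For part (1), I would identify $\mathfrak{C}(G)$ with the set of fixed points of the closure operator $\mathbf{C}_G \circ \mathbf{C}_G$: every centralizer is a fixed point by part (3), and conversely any fixed point is by definition a centralizer. Since $\mathcal{P}(G)$ is a complete lattice, the fixed points of a closure operator on it again form a complete lattice, so $\mathfrak{C}(G)$ is a lattice; explicitly, the meet of a family in $\mathfrak{C}(G)$ is its intersection, using $\bigcap_i \mathbf{C}_G(A_i) = \mathbf{C}_G(\bigcup_i A_i)$, and its join is $\mathbf{C}_G(\mathbf{C}_G(\bigcup_i H_i))$, equivalently the intersection of all centralizers containing each $H_i$; the largest element is $\mathbf{C}_G(\emptyset) = G$ and the smallest is $\mathbf{C}_G(G) = \mathbf{Z}(G)$. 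No step here presents a genuine obstacle — the content is entirely the two observations (i) and (ii) plus the bookkeeping of a Galois connection; the only points needing a little care are verifying that $\mathfrak{C}(G)$ is exactly the fixed-point set and that the join in $\mathfrak{C}(G)$ is the closure of the union rather than the union itself.
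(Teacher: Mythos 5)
Your proposal is correct and takes essentially the same route as the paper: the Galois connection/closure-operator framework on $\mathcal{P}(G)$, the triple-centralizer identity $\mathbf{C}_G(\mathbf{C}_G(\mathbf{C}_G(A))) = \mathbf{C}_G(A)$, and the resulting inclusion-reversing involution on $\mathfrak{C}(G)$ for parts (2) and (3). The only real difference is in part (1), where the paper simply cites Schmidt and records the meet $H \cap K$ and join $\mathbf{C}_G(\mathbf{C}_G(H) \cap \mathbf{C}_G(K))$, while you prove the lattice claim directly by identifying $\mathfrak{C}(G)$ with the fixed points of the closure operator and using $\bigcap_i \mathbf{C}_G(A_i) = \mathbf{C}_G(\bigcup_i A_i)$ --- a correct and self-contained substitute for the citation.
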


We pause briefly to consider Galois connections on posets. This is a beautiful area of mathematics and perhaps a gateway math to category theory or universal algebra. 

Given posets $\mathcal{A}$ and $\mathcal{B}$, a \textit{Galois connection} is a pair $(f,g)$ of order-reversing maps $f : \mathcal{A} \rightarrow \mathcal{B}$ and $g : \mathcal{B} \rightarrow \mathcal{A}$ such that $$Y \leq f(X) \iff X \leq g(Y)$$ for every $X \in \mathcal{A}$ and for every $Y \in \mathcal{B}$.

The definition we give here is the classical Galois connection where the maps are order-reversing.  It can be shown that in any Galois connection, the compositions $g \circ f : \mathcal{A} \rightarrow \mathcal{A}$ and $f \circ g : \mathcal{B} \rightarrow \mathcal{B}$ are closure operators.

A {\it closure operator}, $\mathrm{cl}$, is a map from a poset $\mathcal{P}$ to itself that satisfies the following three axioms for every $X,Y \in \mathcal{P}$.
\begin{enumerate}
\item (Extensive) $X \leq \mathrm{cl}(X)$.
\item (Increasing) $X \leq Y \implies \mathrm{cl}(X) \leq \mathrm{cl}(Y)$.
\item (Idempotent) $\mathrm{cl}(\mathrm{cl}(X)) = \mathrm{cl}(X)$.
\end{enumerate}

The reader is familiar with a closure operator on the power set of a group $\mathcal{P}(G)$. The subgroup generated by operator, $\langle \cdot \rangle$, is a closure operator on $\mathcal{P}(G)$. After showing that $\mathbf{C}_G(\mathbf{C}_G(\cdot))$ is a closure operator, we will explore ways in which its behavior mimics that of $\langle \cdot \rangle.$ 

\begin{proposition} \label{prop: Galois}
Let $G$ be a group and let $\mathcal{P}(G)$ be the power set of $G$ poset under containment.  The pair $(\mathbf{C}_G (\cdot), \mathbf{C}_G (\cdot))$ is an order-reversing Galois connection between $\mathcal{P}(G)$ and itself.
\end{proposition}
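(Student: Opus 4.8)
The plan is to verify the two defining conditions of a classical (order-reversing) Galois connection directly from the definition of $\mathbf{C}_G(\cdot)$, taking $\mathcal{A} = \mathcal{B} = \mathcal{P}(G)$ and $f = g = \mathbf{C}_G(\cdot)$. First I would dispatch the order-reversing requirement: this is exactly Lemma \ref{lem: contain}, which says $S \subseteq T \implies \mathbf{C}_G(T) \subseteq \mathbf{C}_G(S)$, so no new work is needed there.

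The remaining task is the adjunction-style biconditional: for all $S, T \in \mathcal{P}(G)$,
$$T \subseteq \mathbf{C}_G(S) \iff S \subseteq \mathbf{C}_G(T).$$
Here I would unwind both sides to the same symmetric statement. By definition, $T \subseteq \mathbf{C}_G(S)$ says precisely that $ts = st$ for every $t \in T$ and every $s \in S$. Likewise $S \subseteq \mathbf{C}_G(T)$ says $st = ts$ for every $s \in S$ and every $t \in T$. Since the relation ``$x$ and $y$ commute'' is symmetric in $x$ and $y$, these two conditions are literally the same ``for all pairs'' statement, and the equivalence follows immediately.

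There is no real obstacle here; the proposition is essentially the observation that the commuting relation on $G \times G$ is symmetric, packaged in the language of Galois connections. If anything, the only point worth stating carefully is that the quantifier block ``$\forall t \in T\ \forall s \in S\ (ts = st)$'' is invariant under swapping the two universally quantified variables, which is what licenses reading it as both $T \subseteq \mathbf{C}_G(S)$ and $S \subseteq \mathbf{C}_G(T)$. Once both conditions of the definition are checked, we may conclude that $(\mathbf{C}_G(\cdot), \mathbf{C}_G(\cdot))$ is an order-reversing Galois connection on $\mathcal{P}(G)$, and in particular (by the general fact recalled just before the statement) that $\mathbf{C}_G(\mathbf{C}_G(\cdot))$ is a closure operator on $\mathcal{P}(G)$.
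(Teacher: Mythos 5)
Your proof is correct and follows the same route as the paper: it cites Lemma \ref{lem: contain} for the order-reversing property and observes that both $T \subseteq \mathbf{C}_G(S)$ and $S \subseteq \mathbf{C}_G(T)$ unwind to the single symmetric statement that every element of $S$ commutes with every element of $T$. Nothing is missing.
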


\begin{proof}
Lemma \ref{lem: contain} shows that $\mathbf{C}_G(\cdot)$ is order reversing.   And for $S, T$ subsets of $G$, 
$$T \subseteq \mathbf{C}_G(S) \iff S \subseteq \mathbf{C}_G(T)$$ 
holds since both of these say that every element of $S$ commutes with every element of $T$.
\end{proof}

It follows that $\mathbf{C}_G(\mathbf{C}_G(\cdot))$ is a closure operator on the power set of a group $G$. We illustrate this in Figure \ref{fig: cc_d8} for the subgroups of the group $G=D_8$.  
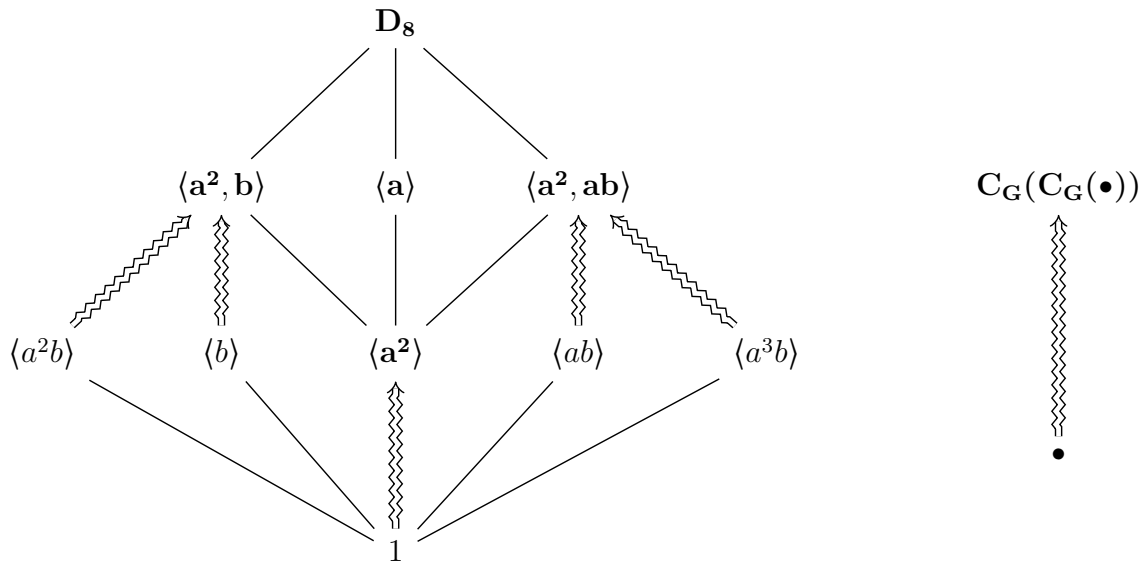
\begin{figure}
\[\begin{tikzcd}
	&& { \mathbf{D_8}} \\
	\\
	& { \mathbf{ \langle a^2,b \rangle}} & { \mathbf{ \langle a\rangle }} & { \mathbf{\langle a^2,ab\rangle}} &&& { \mathbf{C_G(C_G(\bullet))}} \\
	\\
	{ \langle a^2b \rangle} & { \langle b \rangle} & { \mathbf{\langle a^2\rangle}} & { \langle ab\rangle} & {\langle a^3 b\rangle} \\
	&&&&&& \bullet \\
	&& 1 \\
	\arrow[no head, from=3-2, to=1-3]
	\arrow[no head, from=3-2, to=5-3]
	\arrow[no head, from=3-3, to=1-3]
	\arrow[no head, from=3-4, to=1-3]
	\arrow[Rightarrow, squiggly, from=5-1, to=3-2]
	\arrow[no head, from=5-1, to=7-3]
	\arrow[Rightarrow, squiggly, from=5-2, to=3-2]
	\arrow[no head, from=5-2, to=7-3]
	\arrow[no head, from=5-3, to=3-3]
	\arrow[no head, from=5-3, to=3-4]
	\arrow[Rightarrow, squiggly, from=5-4, to=3-4]
	\arrow[no head, from=5-4, to=7-3]
	\arrow[Rightarrow, squiggly, from=5-5, to=3-4]
	\arrow[no head, from=5-5, to=7-3]
	\arrow[Rightarrow, squiggly, from=6-7, to=3-7]
	\arrow[Rightarrow, squiggly, from=7-3, to=5-3]
\end{tikzcd}\]
\caption{A visualization of the map $\mathbf{C}_G(\mathbf{C}_G(\cdot))$ for $G=D_8$. For a subgroup in $D_8$, follow the double arrows until you arrive at a bolded subgroup, which is a fixed point of the closure operator. Note that this group, $G=D_8$ has the property that you only have to move once---in general this is not true.} \label{fig: cc_d8}
\end{figure}

\begin{proposition}\label{prop: triple_g}
Let $\mathcal{P}$ be a poset and suppose $g:\mathcal{P}\rightarrow \mathcal{P}$ is an order-reversing map such that the pair $(g,g)$ is a Galois connection between $\mathcal{P}$ and itself. Then for any $S \in \mathcal{P}$ we have $g(g(g(S)))=g(S)$. 
\end{proposition}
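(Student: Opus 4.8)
The plan is to use the defining adjunction property of the Galois connection $(g,g)$ together with the closure-operator axioms that $g \circ g$ inherits. First I would record what the Galois connection gives us: for all $X, Y \in \mathcal{P}$, $Y \leq g(X) \iff X \leq g(Y)$. Specializing $Y = g(X)$ makes the left-hand side $g(X) \leq g(X)$, which is trivially true, so the right-hand side $X \leq g(g(X))$ holds for every $X$; that is, $g \circ g$ is extensive. Likewise $g$ is order-reversing by hypothesis, so $g \circ g$ is order-preserving.

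Now fix $S \in \mathcal{P}$ and write $T = g(S)$. I want to show $g(g(T)) = T$, i.e.\ $g(g(g(S))) = g(S)$, and I would prove the two inequalities separately. For $T \leq g(g(T))$: this is just the extensive property of $g \circ g$ applied to $T$, which we established above. For $g(g(T)) \leq T$: apply the extensive property to $S$ to get $S \leq g(g(S)) = g(T)$; then apply the order-reversing map $g$ to this inequality, which flips it to $g(g(T)) \leq g(S) = T$. Combining the two inequalities with antisymmetry of the partial order yields $g(g(g(S))) = g(S)$.

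Alternatively, and perhaps more cleanly, one can get $g(g(T)) \leq T$ directly from the adjunction: in $Y \leq g(X) \iff X \leq g(Y)$ set $X = g(T)$ and $Y = g(g(T))$; the right-hand side becomes $g(T) \leq g(g(T))$, which holds by extensiveness of $g \circ g$ at $g(T)$... wait, that gives the wrong direction, so I would instead set $X = T$, $Y = g(g(T))$: then the left side $g(g(T)) \leq g(T)$ is exactly what I want, and it is equivalent to the right side $T \leq g(g(T))$, which is extensiveness at $T$. Either route works; I would present the order-reversing-plus-extensive argument of the previous paragraph since it is the most transparent.

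I do not anticipate a genuine obstacle here — the statement is the standard fact that applying one half of a (contravariant) Galois connection three times equals applying it once. The only mild care needed is bookkeeping about the direction of inequalities when $g$ is order-\emph{reversing} rather than order-preserving, and making sure to invoke antisymmetry of $\leq$ at the end to upgrade the two inequalities to an equality. If the paper wanted, one could also just cite that $g \circ g$ is a closure operator (idempotent), apply it to $S$ to get $g(g(g(g(S)))) = g(g(S))$, and then use that $g$ is injective on its image; but the direct two-line inequality argument is shorter and self-contained.
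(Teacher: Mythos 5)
Your main argument is correct and is essentially the paper's own proof: extensivity of $g \circ g$ at $S$ followed by one application of the order-reversing map $g$ gives $g(g(g(S))) \leq g(S)$, extensivity at $g(S)$ gives the reverse inequality, and antisymmetry finishes (your derivation of extensivity directly from the adjunction is a small self-contained addition where the paper simply cites that $g \circ g$ is a closure operator). One caveat on your discarded ``alternative'': taking $X = T$, $Y = g(g(T))$ in the adjunction yields $g(g(T)) \leq g(T)$, which is not the needed $g(g(T)) \leq T$ (here $g(T) = g(g(S)) \neq T$ in general), so that route as written does not work --- but since you explicitly present the first argument, this does not affect the validity of your proof.
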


\begin{proof}
Since $g\circ g$ is a closure operation, it is extensive, so $S\subseteq g(g(S)).$ Since $g$ is order-reversing, we also have $$g(g(g(S))) \subseteq g(S).$$ Applying extensivity to $g(S)$ we have $$g(S) \subseteq g(g(g(S))).$$ Therefore $g(S) = g(g(g(S))).$  
\end{proof}

We state the corollary for $\mathbf{C}_G(\mathbf{C}_G(\cdot))$.

\begin{corollary}\label{cor: triple_C}
Let $G$ be a group and $S \subseteq G$. Then $\mathbf{C}_G(\mathbf{C}_G(\mathbf{C}_G(S))) = \mathbf{C}_G(S)$.
\end{corollary}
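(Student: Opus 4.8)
The plan is to deduce this immediately from Proposition \ref{prop: triple_g}. By Proposition \ref{prop: Galois}, the pair $(\mathbf{C}_G(\cdot), \mathbf{C}_G(\cdot))$ is an order-reversing Galois connection between $\mathcal{P}(G)$ and itself; thus, taking $\mathcal{P} = \mathcal{P}(G)$ and $g = \mathbf{C}_G(\cdot)$, the hypotheses of Proposition \ref{prop: triple_g} are satisfied. Applying that proposition to an arbitrary subset $S \subseteq G$ yields $g(g(g(S))) = g(S)$, which is precisely $\mathbf{C}_G(\mathbf{C}_G(\mathbf{C}_G(S))) = \mathbf{C}_G(S)$, the desired conclusion.

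As an alternative, one can argue directly, essentially re-running the proof of Proposition \ref{prop: triple_g} in this concrete setting. Since $\mathbf{C}_G(\mathbf{C}_G(\cdot))$ is a closure operator on $\mathcal{P}(G)$, extensivity applied to $S$ gives $S \subseteq \mathbf{C}_G(\mathbf{C}_G(S))$; applying the order-reversing map $\mathbf{C}_G(\cdot)$ (Lemma \ref{lem: contain}) gives $\mathbf{C}_G(\mathbf{C}_G(\mathbf{C}_G(S))) \subseteq \mathbf{C}_G(S)$. In the other direction, extensivity applied to the subset $\mathbf{C}_G(S)$ gives $\mathbf{C}_G(S) \subseteq \mathbf{C}_G(\mathbf{C}_G(\mathbf{C}_G(S)))$. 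Combining the two inclusions gives the equality.

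There is no real obstacle here: the statement is a direct specialization of Proposition \ref{prop: triple_g}, and the only point to verify is that its hypotheses hold, which is exactly the content of Proposition \ref{prop: Galois}. The one thing worth a sentence of care is matching the phrasing — that the single map $\mathbf{C}_G(\cdot)$ plays the role of $g$ in the pair $(g,g)$ — after which the conclusion is automatic.
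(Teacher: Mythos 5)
Your proposal is correct and matches the paper's proof exactly: the paper also invokes Proposition \ref{prop: Galois} to verify the hypotheses of Proposition \ref{prop: triple_g} and concludes immediately. Your alternative direct argument is just the proof of Proposition \ref{prop: triple_g} specialized to $\mathbf{C}_G(\cdot)$, so nothing genuinely different is added.
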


\begin{proof}
By Proposition \ref{prop: Galois}, $(\mathbf{C}_G (\cdot),\mathbf{C}_G (\cdot))$ is an order-reversing Galois connection between $\mathcal{P}(G)$ and itself, and Proposition \ref{prop: triple_g} applies.
\end{proof}

Let $\mathfrak{C}(G)$ denote the image of the map $\mathbf{C}_G(\cdot) : \mathcal{P}(G) \rightarrow \mathcal{P}(G)$.  Hence $$\mathfrak{C}(G) = \{ \mathbf{C}_G(S) \mid S \subseteq G \}$$ is the set of all centralizers in $G$.  Observe the following.

\begin{proposition}\label{prop: cent_basic}
Let $G$ be a group and $S,T$ be subsets of $G$.  
\begin{enumerate}
\item We have $S = \mathbf{C}_G(\mathbf{C}_G(S))$ if and only if $S \in \mathfrak{C}(G)$.
\item We have $S \subseteq T$ implies $\mathbf{C}_G(T) \subseteq \mathbf{C}_G(S)$.
\item If $T \in \mathfrak{C}(G)$, then $S \subseteq T$ if and only if $\mathbf{C}_G(T) \subseteq \mathbf{C}_G(S)$.
\end{enumerate}
\end{proposition}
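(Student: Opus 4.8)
The plan is to derive all three parts from results already established—chiefly Lemma \ref{lem: contain}, Corollary \ref{cor: triple_C}, and the fact that $\mathbf{C}_G(\mathbf{C}_G(\cdot))$ is a closure operator. Part (2) is literally Lemma \ref{lem: contain}, so nothing new is needed there; it is restated here only because parts (1) and (3) will invoke it repeatedly.

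For part (1), I would argue both directions directly. If $S = \mathbf{C}_G(\mathbf{C}_G(S))$, then $S$ is visibly the centralizer of the subset $\mathbf{C}_G(S)$ of $G$, hence $S \in \mathfrak{C}(G)$ by definition. Conversely, if $S \in \mathfrak{C}(G)$, write $S = \mathbf{C}_G(T)$ for some $T \subseteq G$; then Corollary \ref{cor: triple_C} gives $\mathbf{C}_G(\mathbf{C}_G(S)) = \mathbf{C}_G(\mathbf{C}_G(\mathbf{C}_G(T))) = \mathbf{C}_G(T) = S$.

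For part (3), the forward implication is immediate from part (2). For the reverse, suppose $\mathbf{C}_G(T) \subseteq \mathbf{C}_G(S)$; applying the order-reversing map $\mathbf{C}_G(\cdot)$ (Lemma \ref{lem: contain}) yields $\mathbf{C}_G(\mathbf{C}_G(S)) \subseteq \mathbf{C}_G(\mathbf{C}_G(T))$. Extensivity of the closure operator $\mathbf{C}_G(\mathbf{C}_G(\cdot))$ gives $S \subseteq \mathbf{C}_G(\mathbf{C}_G(S))$, while the hypothesis $T \in \mathfrak{C}(G)$ combined with part (1) gives $\mathbf{C}_G(\mathbf{C}_G(T)) = T$. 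Chaining these inclusions, $S \subseteq \mathbf{C}_G(\mathbf{C}_G(S)) \subseteq \mathbf{C}_G(\mathbf{C}_G(T)) = T$, as desired.

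There is no real obstacle here; the only point requiring a little care is keeping track of which set is assumed to lie in $\mathfrak{C}(G)$ in part (3)—the argument genuinely uses $T \in \mathfrak{C}(G)$ (not $S$) when collapsing $\mathbf{C}_G(\mathbf{C}_G(T))$ back to $T$, and the asymmetry is what makes the "only if" direction fail in general without that hypothesis.
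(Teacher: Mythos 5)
Your proposal is correct and follows essentially the same route as the paper: both directions of (1) via Corollary \ref{cor: triple_C}, part (2) as Lemma \ref{lem: contain}, and part (3) by applying the order-reversing map, extensivity of $\mathbf{C}_G(\mathbf{C}_G(\cdot))$, and collapsing $\mathbf{C}_G(\mathbf{C}_G(T))$ to $T$ (the paper does this by writing $T=\mathbf{C}_G(X)$ and citing the corollary, you by citing part (1) — the same argument in substance).
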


\begin{proof}
For item (1), clearly $S = \mathbf{C}_G(\mathbf{C}_G(S))$ implies that $S \in \mathfrak{C}(G)$.  Conversely, suppose $S \in \mathfrak{C}(G)$.  So $S = \mathbf{C}_G(T)$ for some $T \subseteq G$.  By Corollary \ref{cor: triple_C}, $$S = \mathbf{C}_G(T) = \mathbf{C}_G(\mathbf{C}_G(\mathbf{C}_G(T))) = \mathbf{C}_G(\mathbf{C}_G(S)).$$

Item (2) is Lemma \ref{lem: contain}.  For item (3), suppose $T \in \mathfrak{C}(G)$ and let $\mathbf{C}_G(T) \subseteq \mathbf{C}_G(S)$.  Since $T \in \mathfrak{C}(G)$, we have $T = \mathbf{C}_G(X)$ for some $X \subseteq G$. So $\mathbf{C}_G(\mathbf{C}_G(X)) \subseteq \mathbf{C}_G(S)$.  Applying Lemma \ref{lem: contain} again, we obtain $$\mathbf{C}_G(\mathbf{C}_G(S)) \subseteq \mathbf{C}_G(\mathbf{C}_G(\mathbf{C}_G(X))).$$

Since $\mathbf{C}_G(\mathbf{C}_G(\cdot))$ is extensive, and by Corollary \ref{cor: triple_C}, we have
$$S \subseteq \mathbf{C}_G(\mathbf{C}_G(S)) \subseteq \mathbf{C}_G(\mathbf{C}_G(\mathbf{C}_G(X))) = \mathbf{C}_G(X) = T.$$
\end{proof}

The following is an immediate corollary.

\begin{corollary} \label{cor: bij}
Let $G$ be a group and $\mathfrak{C}(G)$ be the set of all centralizers of $G$.  The map $\mathbf{C}_G(\cdot) : \mathfrak{C}(G) \rightarrow \mathfrak{C}(G)$ is an order-reversing bijection with inverse given by $\mathbf{C}_G(\cdot)$.
\end{corollary}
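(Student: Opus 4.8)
The plan is to read everything off from Proposition \ref{prop: cent_basic}, so this really is a packaging exercise. First I would check that the map is well defined with the stated codomain: for \emph{any} subset $S$ of $G$ the set $\mathbf{C}_G(S)$ is by definition a member of $\mathfrak{C}(G)$, so in particular the restriction of $\mathbf{C}_G(\cdot)$ to $\mathfrak{C}(G)$ does take values in $\mathfrak{C}(G)$. That this restricted map is order-reversing is exactly Proposition \ref{prop: cent_basic}(2) (equivalently Lemma \ref{lem: contain}), which holds for arbitrary subsets of $G$ and hence a fortiori on $\mathfrak{C}(G)$.

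Next I would establish that $\mathbf{C}_G(\cdot)$ is its own two-sided inverse on $\mathfrak{C}(G)$. By Proposition \ref{prop: cent_basic}(1), a subset $S$ lies in $\mathfrak{C}(G)$ precisely when $\mathbf{C}_G(\mathbf{C}_G(S)) = S$; hence for every $S \in \mathfrak{C}(G)$ we have $\mathbf{C}_G(\mathbf{C}_G(S)) = S$, i.e.\ the composition of the map with itself is the identity on $\mathfrak{C}(G)$. A self-map of a set whose square is the identity is automatically a bijection equal to its own inverse, which yields the corollary. (If one wants the order-isomorphism phrasing, combine this with the order-reversal from the previous step to conclude that $\mathbf{C}_G(\cdot)$ is an anti-automorphism of the poset $\mathfrak{C}(G)$, the form in which it gets used later.)

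There is no genuine obstacle here; the one point worth flagging is that the crucial identity $\mathbf{C}_G(\mathbf{C}_G(S)) = S$ really does require passing to the subposet $\mathfrak{C}(G)$. On the full power set $\mathbf{C}_G(\mathbf{C}_G(\cdot))$ is only a closure operator and need not be the identity, so the statement is false with $\mathcal{P}(G)$ in place of $\mathfrak{C}(G)$; it is precisely Proposition \ref{prop: cent_basic}(1) that rescues idempotence-on-the-image into a genuine involution. Beyond noting this, no further computation is needed.
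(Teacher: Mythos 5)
Your proof is correct and matches the paper's intent: the paper states this as an immediate consequence of Proposition \ref{prop: cent_basic} without writing out a proof, and your packaging (well-definedness since every centralizer lies in $\mathfrak{C}(G)$, order-reversal from item (2), and the involution $\mathbf{C}_G(\mathbf{C}_G(S)) = S$ for $S \in \mathfrak{C}(G)$ from item (1)) is exactly the intended argument. Your closing remark that the identity fails on $\mathcal{P}(G)$ and only holds on the image is a fair observation and consistent with the paper's framing.
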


The subgroups in $\mathfrak{C}(G)$ form a lattice, see \cite{sch70, sch_book}, where for $H,K \in \mathfrak{C}(G)$, we have meet given by $H \wedge K = H \cap K$ and join given by $H \vee K = \mathbf{C}_G(\mathbf{C}_G(H) \cap \mathbf{C}_G(K))$.  The definition of join given above is equivalent to the definition that $H \vee K$ is the intersection of all of the centralizers containing both $H$ and $K$.  The join $H\vee K$ in the centralizer lattice can properly contain $\langle H , K \rangle $: similarly to what we exhibit after Proposition \ref{prop: cent_union_int}, in $G = S_4$ one finds $\langle H,K \rangle = A_4 \subset G = H \vee K$.

Continuing with our comparison to the $\langle \cdot \rangle$ operator, we note that a given centralizer $H \in \mathfrak{C}(G)$ can be obtained as the centralizers of different subsets. That is $H = \mathbf{C}_G(S) = \mathbf{C}_G(T)$ for different subsets $S,T\subseteq G$.  For example, if $G$ is an abelian group, then $G = \mathbf{C}_G(S)$ for any subset $S$ of $G$.  

Treating $\mathbf{C}_G(\cdot)$ as a map from the power set of a group $G$ to itself, and given a centralizer $\mathbf{C}_G(S)$ in the image, we let $\mathfrak{F}_{\mathbf{C}_G(S)}$ denote the \textit{fiber} of $\mathbf{C}_G(S)$, and so $$\mathfrak{F}_{\mathbf{C}_G(S)} = \{ T \subseteq G \,\, | \,\, \mathbf{C}_G(T) = \mathbf{C}_G(S) \}.$$

Here we can apply some structure to the fibers of the map itself. 

\begin{proposition}\label{prop: unique_cent}
Let $G$ be a group and $S \subseteq G$.  Let $$ \mathfrak{F} = \mathfrak{F}_{\mathbf{C}_G(S)} = \{ T \subseteq G \,\, | \,\, \mathbf{C}_G(T) = \mathbf{C}_G(S) \}.$$  We have $\mathfrak{F}$ is closed under arbitrary unions and $\displaystyle \bigcup_{T \in \mathfrak{F}} T = \mathbf{C}_G(\mathbf{C}_G(S))$.  
\end{proposition}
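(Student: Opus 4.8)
The plan is to write $C = \mathbf{C}_G(S)$, so that $\mathfrak{F}$ is exactly the preimage of $C$ under $\mathbf{C}_G(\cdot)$, and to set $D = \mathbf{C}_G(\mathbf{C}_G(S)) = \mathbf{C}_G(C)$. The key first step is to observe that $D$ itself lies in $\mathfrak{F}$: by Corollary \ref{cor: triple_C} we have $\mathbf{C}_G(D) = \mathbf{C}_G(\mathbf{C}_G(\mathbf{C}_G(S))) = \mathbf{C}_G(S) = C$, hence $D \in \mathfrak{F}$; in particular $\mathfrak{F} \neq \emptyset$. This single fact, essentially a repackaging of the ``triple centralizer'' identity, is what drives both halves of the statement.

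For closure under unions, I would take an arbitrary nonempty subcollection $\mathfrak{F}' \subseteq \mathfrak{F}$ and apply Proposition \ref{prop: cent_union_int_1}(1): $\mathbf{C}_G\bigl(\bigcup_{T \in \mathfrak{F}'} T\bigr) = \bigcap_{T \in \mathfrak{F}'} \mathbf{C}_G(T) = \bigcap_{T \in \mathfrak{F}'} C = C$, so $\bigcup_{T \in \mathfrak{F}'} T \in \mathfrak{F}$. For the equality $\bigcup_{T \in \mathfrak{F}} T = D$, the inclusion $\supseteq$ is immediate because $D \in \mathfrak{F}$ by the first step, so $D$ is one of the sets being unioned; for $\subseteq$, take any $T \in \mathfrak{F}$, and use that $\mathbf{C}_G(\mathbf{C}_G(\cdot))$ is extensive together with $\mathbf{C}_G(T) = C$ to conclude $T \subseteq \mathbf{C}_G(\mathbf{C}_G(T)) = \mathbf{C}_G(C) = D$; taking the union over all $T \in \mathfrak{F}$ gives $\bigcup_{T \in \mathfrak{F}} T \subseteq D$.

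I do not expect a real obstacle here, as the argument is just Corollary \ref{cor: triple_C} plus extensivity of the closure operator. The only point requiring a moment's care is the empty-union convention: $\bigcup_{T \in \varnothing} T = \varnothing$ and $\mathbf{C}_G(\varnothing) = G$, which lies in $\mathfrak{F}$ only when $S \subseteq \mathbf{Z}(G)$. I would therefore either phrase ``arbitrary unions'' as arbitrary nonempty unions, or explicitly flag this degenerate case, so that the closure claim is unambiguous.
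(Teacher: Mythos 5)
Your proposal is correct and follows essentially the same route as the paper: both rest on Corollary \ref{cor: triple_C} to show $\mathbf{C}_G(\mathbf{C}_G(S)) \in \mathfrak{F}$, Proposition \ref{prop: cent_union_int_1}(1) for closure under unions, and extensivity of $\mathbf{C}_G(\mathbf{C}_G(\cdot))$ for the reverse inclusion. Your explicit remark about the empty union is a reasonable clarification (the paper implicitly works with nonempty collections, as its Proposition \ref{prop: cent_union_int_1} assumes nonemptiness), but it does not change the substance of the argument.
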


\begin{proof}
Let $A = \mathbf{C}_G(\mathbf{C}_G(S))$. By Corollary \ref{cor: triple_C}, we have $$\mathbf{C}_G(S) = \mathbf{C}_G(\mathbf{C}_G(\mathbf{C}_G(S))),$$ and so $A$ has the property that $\mathbf{C}_G(A) = \mathbf{C}_G(S)$, i.e. $A \in \mathfrak{F}$.  Hence $$A = \mathbf{C}_G(\mathbf{C}_G(S)) \subseteq \bigcup_{T \in \mathfrak{F}} T.$$ 

Now suppose that $\mathfrak{W} \subseteq \mathfrak{F}$ is arbitrary and let $\displaystyle U = \bigcup_{W \in \mathfrak{W}} W$.  By Proposition \ref{prop: cent_union_int_1}, $$ \mathbf{C}_G(U) = \mathbf{C}_G(\bigcup_{W \in \mathfrak{W}} W) =\bigcap_{W \in \mathfrak{W}} \mathbf{C}_G(W) = \mathbf{C}_G(S),$$ and so $U \in \mathfrak{F}$, and $\mathfrak{F}$ is closed under arbitrary unions.  Since $\mathbf{C}_G\mathbf{C}_G$ is extensive, $$U \subseteq \mathbf{C}_G(\mathbf{C}_G(U)) = \mathbf{C}_G(\mathbf{C}_G(S)) = A.$$ Taking $\mathfrak{W} = \mathfrak{F}$, we obtain $\displaystyle \bigcup_{T \in \mathfrak{F}} T \subseteq A$, and thus $\displaystyle \bigcup_{T \in \mathfrak{F}} T = \mathbf{C}_G(\mathbf{C}_G(S))$.
\end{proof}

Note that an analog of Proposition \ref{prop: unique_cent} is well-known for the closure operator $\langle \cdot \rangle$. The union of the subsets that generate the same subgroup is the subgroup itself. Here, the union corresponds to $\mathbf{C}_G(\mathbf{C}_G(\cdot))$. 

An advantage of writing centralizers as centralizers of centralizers is that the join in the centralizer lattice behaves nicely. Continuing with our analogy of `subgroup-generated-by', given two subgroups $\langle X\rangle $ and $\langle Y \rangle$, their join is $\langle X,Y\rangle$. We gain similar `ease of notation' when writing centralizers as centralizers of underlying objects. 

\begin{proposition}\label{prop:join_nice}
Let $G$ be a group and let $H,K \in \mathfrak{C}(G)$, and write $H = \mathbf{C}_G(A)$ and $K = \mathbf{C}_G(B)$ for $A,B \in \mathfrak{C}(G)$.  Then $H \vee K = \mathbf{C}_G(A \cap B)$.
\end{proposition}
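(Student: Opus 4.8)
The plan is to unwind the definition of the join in the centralizer lattice and then apply the fact that $\mathbf{C}_G(\mathbf{C}_G(\cdot))$ acts as the identity on $\mathfrak{C}(G)$. Recall from the discussion preceding the statement that for $H, K \in \mathfrak{C}(G)$ the join is $H \vee K = \mathbf{C}_G(\mathbf{C}_G(H) \cap \mathbf{C}_G(K))$. So the first step is to identify $\mathbf{C}_G(H)$ and $\mathbf{C}_G(K)$. Writing $H = \mathbf{C}_G(A)$ with $A \in \mathfrak{C}(G)$, Proposition \ref{prop: cent_basic}(1) (equivalently Corollary \ref{cor: bij}) gives $\mathbf{C}_G(H) = \mathbf{C}_G(\mathbf{C}_G(A)) = A$, and likewise $\mathbf{C}_G(K) = \mathbf{C}_G(\mathbf{C}_G(B)) = B$. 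Substituting into the join formula yields $H \vee K = \mathbf{C}_G(A \cap B)$, which is exactly the claim.

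Alternatively, I would argue directly from the stated characterization that $H \vee K$ is the intersection of all centralizers containing both $H$ and $K$. For a subset $X \subseteq G$, Proposition \ref{prop: cent_basic}(3) applied with the centralizer $A$ shows that $H = \mathbf{C}_G(A) \subseteq \mathbf{C}_G(X)$ if and only if $X \subseteq A$, and similarly $K \subseteq \mathbf{C}_G(X)$ if and only if $X \subseteq B$; hence a centralizer $\mathbf{C}_G(X)$ contains both $H$ and $K$ exactly when $X \subseteq A \cap B$, in which case $\mathbf{C}_G(A \cap B) \subseteq \mathbf{C}_G(X)$ by Lemma \ref{lem: contain}. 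Taking $X = A \cap B$ shows $\mathbf{C}_G(A \cap B)$ is itself such a centralizer (it contains $\mathbf{C}_G(A) = H$ and $\mathbf{C}_G(B) = K$ since $A \cap B \subseteq A, B$), so it is the least one, i.e.\ the join.

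There is no real obstacle here: this is a two-line computation once the join formula (or its characterization) is in place. The only point requiring care is to use the hypothesis $A, B \in \mathfrak{C}(G)$ rather than allowing $A, B$ to be arbitrary subsets — it is precisely this that lets the double centralizer $\mathbf{C}_G(\mathbf{C}_G(A))$ collapse back to $A$. I would include a one-sentence remark that the resulting presentation $H \vee K = \mathbf{C}_G(A \cap B)$ is the centralizer-lattice analogue of $\langle X \rangle \vee \langle Y \rangle = \langle X \cup Y \rangle$, with union of underlying sets replaced by intersection, matching the inclusion-reversing nature of $\mathbf{C}_G(\cdot)$.
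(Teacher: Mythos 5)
Your first argument is exactly the paper's proof: substitute $\mathbf{C}_G(H)=\mathbf{C}_G(\mathbf{C}_G(A))=A$ and $\mathbf{C}_G(K)=\mathbf{C}_G(\mathbf{C}_G(B))=B$ (valid precisely because $A,B\in\mathfrak{C}(G)$) into the join formula $H\vee K=\mathbf{C}_G(\mathbf{C}_G(H)\cap\mathbf{C}_G(K))$. Your alternative argument via the ``least centralizer containing both'' characterization is also correct, but it adds nothing essential beyond the two-line computation the paper gives.
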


\begin{proof}
We have
    $$H \vee K = \mathbf{C}_G(\mathbf{C}_G(H) \cap \mathbf{C}_G(K)) = \mathbf{C}_G(\mathbf{C}_G(\mathbf{C}_G(A)) \cap \mathbf{C}_G(\mathbf{C}_G(B))) = \mathbf{C}_G(A \cap B),$$ since $A,B \in \mathfrak{C}(G)$. 
\end{proof}

In summary, the closure operator $\mathbf{C}_G(\mathbf{C}_G(\cdot))$ closes a set $S$ with respect to being a centralizer.  Every centralizer $\mathbf{C}_G(S)$ can be rewritten as $\mathbf{C}_G(\mathbf{C}_G(\mathbf{C}_G(S)))$, which means rewriting a centralizer as a centralizer of a centralizer.  Propositions \ref{prop: unique_cent} and \ref{prop:join_nice} show nice properties that the centralizer closure yields for the centralizer map and the centralizer lattice.

Throughout, we have encountered  similarities with the operator $\langle \cdot \rangle$ and $\mathbf{C}_G(\mathbf{C}_G(\cdot))$. We extend this analogy even more in the next section. Recall that in general, when writing a subgroup as $\langle X \rangle$ inside a group $G$, one tries to limit the size of $X$ by removing redundant elements. We show how to do this for the centralizer operator in the next section. 

\section{An equivalence relation via element centralizers}\label{sec: elems}

Continuing with the comparison of $\mathbf{C}_G(\mathbf{C}_G(\cdot))$ to $\langle \cdot \rangle$ we investigate the smallest generating sets with respect to $\mathbf{C}_G(\mathbf{C}_G(\cdot))$. Here, we define a standard generating set for every centralizer subgroup using the centralizers of elements (the so-called element centralizers). Our results here mimic the so-called \emph{cogenerator} relationship for group elements that generate the same cyclic subgroup. 

We obtain results related to containment of element centralizers and, dually, their centers in the centralizer lattice.  We define a `centralizer equivalence relation' on the elements of a group, and we show that a centralizer is the disjoint union of the equivalence classes of the element centers that it contains. 

\begin{thm}\label{intro: z_stars_fin}
Let $G$ be a finite nonabelian group and let $H \in \mathfrak{C}(G) \setminus \{ \mathbf{Z}(G) \}$. There exist elements $g_1, \dots, g_t$ in $G \setminus \mathbf{Z}(G)$ so that $\mathbf{C}_G(g_1), \dots, \mathbf{C}_G(g_t)$ are distinct and comprise all of the proper element centralizers that contain $\mathbf{C}_G(H)$.  Furthermore, there exist subsets $\mathbf{Z}^* (g_i) \subseteq \mathbf{Z}({\bf C}_G (g_i))$ for each $i$ so that $\displaystyle H = (\bigcup_{i=1}^t \mathbf{Z}^*(g_i) )\cup \mathbf{Z}(G)$, and this union is disjoint.
\end{thm}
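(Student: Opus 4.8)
The plan is to set $C := \mathbf{C}_G(H)$ and work with the fact that $H$, being a centralizer, satisfies $\mathbf{C}_G(C) = \mathbf{C}_G(\mathbf{C}_G(H)) = H$ by part (3) of the first theorem. Since every centralizer in $G$ contains $\mathbf{Z}(G) = \mathbf{C}_G(G)$ and $H \neq \mathbf{Z}(G)$ by hypothesis, we have the proper containment $\mathbf{Z}(G) \subsetneq H$; in particular $H \setminus \mathbf{Z}(G)$ is nonempty. The one fact driving everything is the equivalence
\[
\mathbf{C}_G(g) \supseteq C \iff g \in H \qquad (g \in G).
\]
The forward direction is obtained by applying $\mathbf{C}_G(\cdot)$ to $C \subseteq \mathbf{C}_G(g)$, which yields $\mathbf{C}_G(\mathbf{C}_G(g)) \subseteq \mathbf{C}_G(C) = H$, and then using $g \in \mathbf{C}_G(\mathbf{C}_G(g))$; the reverse direction is immediate, since $g \in H = \mathbf{C}_G(C)$ says precisely that $g$ commutes with every element of $C$.

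From the equivalence, and since $\mathbf{C}_G(g) = G$ exactly when $g \in \mathbf{Z}(G)$, the proper element centralizers containing $C$ are precisely the subgroups of the form $\mathbf{C}_G(g)$ with $g \in H \setminus \mathbf{Z}(G)$. This family is nonempty, and as $G$ is finite it comprises only finitely many distinct subgroups; picking a single representative $g_i \in H \setminus \mathbf{Z}(G)$ for each of them produces $g_1, \dots, g_t \in G \setminus \mathbf{Z}(G)$ with $\mathbf{C}_G(g_1), \dots, \mathbf{C}_G(g_t)$ distinct and enumerating exactly the proper element centralizers that contain $\mathbf{C}_G(H)$. This proves the first assertion.

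For the second assertion I would take $\mathbf{Z}^*(g_i)$ to be the centralizer-equivalence class of $g_i$, i.e. $\mathbf{Z}^*(g_i) = \{ y \in G \mid \mathbf{C}_G(y) = \mathbf{C}_G(g_i) \}$; because $y \in \mathbf{Z}(\mathbf{C}_G(y))$ for every $y$, this set is contained in $\mathbf{Z}(\mathbf{C}_G(g_i))$, as required. To see $(\bigcup_{i=1}^t \mathbf{Z}^*(g_i)) \cup \mathbf{Z}(G) \subseteq H$, note $\mathbf{Z}(G) \subseteq H$ and that any $y \in \mathbf{Z}^*(g_i)$ has $\mathbf{C}_G(y) = \mathbf{C}_G(g_i) \supseteq C$, hence $y \in H$ by the displayed equivalence. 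For the opposite inclusion, let $x \in H$: either $x \in \mathbf{Z}(G)$, or else $\mathbf{C}_G(x)$ is a proper element centralizer with $\mathbf{C}_G(x) \supseteq C$ (again by the equivalence), so $\mathbf{C}_G(x) = \mathbf{C}_G(g_i)$ for some $i$ by the enumeration, whence $x \in \mathbf{Z}^*(g_i)$. Finally, the union is disjoint: distinct $\mathbf{Z}^*(g_i), \mathbf{Z}^*(g_j)$ are distinct classes of an equivalence relation and hence disjoint, and $\mathbf{Z}^*(g_i) \cap \mathbf{Z}(G) = \emptyset$ since $y \in \mathbf{Z}^*(g_i)$ forces $\mathbf{C}_G(y) = \mathbf{C}_G(g_i) \neq G$.

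I expect no serious obstacle here: the mathematical substance is the displayed equivalence together with the structural results carried over from the earlier sections (that $\mathbf{C}_G(\mathbf{C}_G(\cdot))$ is a closure operator, that $\mathbf{C}_G$ is an inclusion-reversing involution on $\mathfrak{C}(G)$, and the basic behaviour of element centralizers, element centers, and the centralizer-equivalence relation). The only points demanding a little care are the translation of ``proper element centralizer containing $\mathbf{C}_G(H)$'' into ``$\mathbf{C}_G(g)$ for some $g \in H \setminus \mathbf{Z}(G)$'', and the observation that finiteness of $G$ is exactly what guarantees the list $g_1, \dots, g_t$ is finite.
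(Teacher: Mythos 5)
Your proof is correct and follows essentially the same route as the paper: the displayed equivalence $\mathbf{C}_G(g) \supseteq \mathbf{C}_G(H) \iff g \in H$ is exactly the duality/closure argument (Corollary \ref{cor: bij}, $g \in \mathbf{Z}(g) \subseteq H$) that the paper uses in Theorem \ref{thm: z_stars}, and your partition into the classes $\mathbf{Z}^*(g_i)$ together with $\mathbf{Z}(G)$ is precisely how the paper specializes that theorem to the finite case in Corollary \ref{cor: z_stars_fin}. The only cosmetic difference is that you prove the decomposition directly for the given $H$ rather than quoting the general (possibly infinite) statement and then extracting the finite list of representatives.
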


For a group $G$, define an equivalence relation on the elements by $x \sim y$ if and only if $\mathbf{C}_G(\mathbf{C}_G(x)) = \mathbf{C}_G(\mathbf{C}_G(y))$. We note that $x \sim y$ if and only if $\mathbf{C}_G(x) = \mathbf{C}_G(y)$.  (Note that this equivalence relation is defined in \cite{cover}.)

\begin{proposition}\label{prop: equiv_C-CC}
Let $G$ be a group and $x,y\in G$. Then $\mathbf{C}_G(x) = \mathbf{C}_G(y)$ if and only if $\mathbf{C}_G(\mathbf{C}_G(x)) = \mathbf{C}_G(\mathbf{C}_G(y))$. 
\end{proposition}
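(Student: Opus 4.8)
The statement is an instance of the general ``cogenerator'' phenomenon: applying an order-reversing map with a Galois-connection structure either side of an equality is reversible. The plan is to prove the two implications separately, with the forward direction being immediate and the reverse direction relying on the triple-composition identity.

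First I would handle the easy direction. If $\mathbf{C}_G(x) = \mathbf{C}_G(y)$, then applying the map $\mathbf{C}_G(\cdot)$ to both sides of this equality of subsets yields $\mathbf{C}_G(\mathbf{C}_G(x)) = \mathbf{C}_G(\mathbf{C}_G(y))$ directly, since $\mathbf{C}_G(\cdot)$ is a well-defined function on $\mathcal{P}(G)$.

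For the converse, suppose $\mathbf{C}_G(\mathbf{C}_G(x)) = \mathbf{C}_G(\mathbf{C}_G(y))$. Apply $\mathbf{C}_G(\cdot)$ once more to both sides to obtain $\mathbf{C}_G(\mathbf{C}_G(\mathbf{C}_G(x))) = \mathbf{C}_G(\mathbf{C}_G(\mathbf{C}_G(y)))$. Now invoke Corollary \ref{cor: triple_C} applied to the singletons $S = \{x\}$ and $S = \{y\}$: it gives $\mathbf{C}_G(\mathbf{C}_G(\mathbf{C}_G(x))) = \mathbf{C}_G(x)$ and $\mathbf{C}_G(\mathbf{C}_G(\mathbf{C}_G(y))) = \mathbf{C}_G(y)$. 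Substituting these into the previous equality gives $\mathbf{C}_G(x) = \mathbf{C}_G(y)$, completing the proof.

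There is essentially no obstacle here; the only point requiring a moment's care is that $\{x\}$ need not itself be a centralizer, so one cannot quote Proposition \ref{prop: cent_basic}(3) or the bijectivity of Corollary \ref{cor: bij} on the nose — instead the argument must go through Corollary \ref{cor: triple_C}, which holds for arbitrary subsets $S \subseteq G$ and in particular for singletons. Everything else is a one-line application of functionality of $\mathbf{C}_G(\cdot)$.
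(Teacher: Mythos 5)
Your proof is correct and matches the paper's argument exactly: the forward direction applies $\mathbf{C}_G(\cdot)$ to both sides, and the reverse direction applies it once more and reduces via the identity $\mathbf{C}_G(\mathbf{C}_G(\mathbf{C}_G(\cdot))) = \mathbf{C}_G(\cdot)$ of Corollary \ref{cor: triple_C}. Nothing further is needed.
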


\begin{proof}
The forward direction follows by applying $\mathbf{C}_G(\cdot)$ to both sides.  For the reverse direction apply $\mathbf{C}_G(\cdot)$ to both sides and reduce using $\mathbf{C}_G(\mathbf{C}_G(\mathbf{C}_G(\cdot))) = \mathbf{C}_G(\cdot)$. 
\end{proof}

We fix a group $G$, and let $X$ denote a fixed set of representatives for the equivalence classes of $\sim$. (For infinite groups, one may invoke the axiom of choice to define $X$.)  Given a centralizer $H \in \mathfrak{C}(G)$, set $\mathfrak{F}_H^* = \{ T \subseteq X \mid \mathbf{C}_G(T) = H \}$.  Note that $\mathfrak{F}_H^*$ is analogous to the fiber $\mathfrak{F}_H$ defined in the previous section, where $\mathfrak{F}^*$ is restricted to within the representative set $X$.

We now show how to reduce a centralizer of a centralizer to a centralizer of a smaller set that retains nice properties.  Given a centralizer $H \in \mathfrak{C}(G)$, let $U_H^* = \{ x \in X \mid H \subseteq \mathbf{C}_G(x) \}$. The $U$ stands for ``upper''. 

\begin{proposition} \label{prop: upper}
Let $G$ be a group and $H \in \mathfrak{C}(G)$, and consider $U_H^*$ with respect to a fixed representative set $X$ with respect to $\sim$.  Then $H = \mathbf{C}_G(U_H^*)$.  
\end{proposition}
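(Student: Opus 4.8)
The plan is to show both inclusions $H \subseteq \mathbf{C}_G(U_H^*)$ and $\mathbf{C}_G(U_H^*) \subseteq H$. The first inclusion is the easy direction: by definition of $U_H^*$, every $x \in U_H^*$ satisfies $H \subseteq \mathbf{C}_G(x)$, which is equivalent (since $x$ is a single element, so $\mathbf{C}_G(\mathbf{C}_G(x)) \supseteq \langle x \rangle \ni x$, and more simply by Proposition \ref{prop: cent_basic}(3) applied with the element center, or just directly) to saying $x \in \mathbf{C}_G(H)$. Hence $U_H^* \subseteq \mathbf{C}_G(H)$, and applying the order-reversing map $\mathbf{C}_G(\cdot)$ together with $\mathbf{C}_G(\mathbf{C}_G(H)) = H$ (which holds because $H \in \mathfrak{C}(G)$, by Proposition \ref{prop: cent_basic}(1)) gives $H = \mathbf{C}_G(\mathbf{C}_G(H)) \subseteq \mathbf{C}_G(U_H^*)$.

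The second inclusion $\mathbf{C}_G(U_H^*) \subseteq H$ is where the representative set $X$ does its work. Write $H = \mathbf{C}_G(\mathbf{C}_G(H))$ and observe that it suffices to show $\mathbf{C}_G(H) \subseteq \mathbf{C}_G(\mathbf{C}_G(U_H^*))$ — no wait, more directly: I want to show that every element $g$ centralizing all of $U_H^*$ already lies in $H$. The key point is that $\mathbf{C}_G(H)$, as a subset of $G$, is a union of $\sim$-equivalence classes: if $y \in \mathbf{C}_G(H)$ and $y \sim y'$, then $\mathbf{C}_G(y) = \mathbf{C}_G(y')$, and $y \in \mathbf{C}_G(H)$ means $H \subseteq \mathbf{C}_G(y) = \mathbf{C}_G(y')$, so $y' \in \mathbf{C}_G(H)$ as well. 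Therefore each equivalence class meeting $\mathbf{C}_G(H)$ is entirely contained in $\mathbf{C}_G(H)$, and in particular its representative in $X$ lies in $\mathbf{C}_G(H)$. That representative $x$ then satisfies $H \subseteq \mathbf{C}_G(x)$, i.e. $x \in U_H^*$. Consequently $\mathbf{C}_G(H) \subseteq \langle \, \bigcup_{x \in U_H^*} (\text{class of } x) \,\rangle$ as sets — in fact $\mathbf{C}_G(H)$ is exactly the union of the classes of the elements of $U_H^*$.

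From this I extract the needed inclusion as follows. Since $\mathbf{C}_G(H)$ equals the union of the $\sim$-classes of the elements of $U_H^*$, and since $\mathbf{C}_G(x) = \mathbf{C}_G(x')$ whenever $x \sim x'$, Proposition \ref{prop: cent_union_int_1}(1) gives
\[
\mathbf{C}_G(\mathbf{C}_G(H)) = \mathbf{C}_G\Bigl(\bigcup_{x \in U_H^*} [x]\Bigr) = \bigcap_{x \in U_H^*} \mathbf{C}_G([x]) = \bigcap_{x \in U_H^*}\mathbf{C}_G(x) = \mathbf{C}_G(U_H^*),
\]
where $[x]$ denotes the $\sim$-class of $x$ and the last equality is again Proposition \ref{prop: cent_union_int_1}(1). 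Since $H = \mathbf{C}_G(\mathbf{C}_G(H))$, this yields $H = \mathbf{C}_G(U_H^*)$ outright, giving both inclusions at once.

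The main obstacle, and the only genuinely nontrivial step, is verifying that $\mathbf{C}_G(H)$ is a union of $\sim$-classes and that its representatives are captured by $U_H^*$; once that observation is in place the rest is a formal manipulation of the closure identities from Section \ref{sec: op}. One should also handle the degenerate case $U_H^* = \emptyset$ carefully: this happens precisely when $\mathbf{C}_G(H) = \emptyset$, which is impossible since $1 \in \mathbf{C}_G(H)$ always, so in fact $U_H^*$ always contains (the representative of) $1$ and the collection is nonempty, so Proposition \ref{prop: cent_union_int_1} applies.
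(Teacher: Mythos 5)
Your proof is correct and takes essentially the same route as the paper: both rest on $H = \mathbf{C}_G(\mathbf{C}_G(H))$, the equivalence $x \in \mathbf{C}_G(H) \iff H \subseteq \mathbf{C}_G(x)$, and the fact that $\sim$-equivalent elements share the same centralizer, so that $\mathbf{C}_G$ applied to $\mathbf{C}_G(H)$ (a union of $\sim$-classes) reduces to $\mathbf{C}_G(U_H^*)$ via Proposition \ref{prop: cent_union_int_1}(1). The separate easy-inclusion paragraph and the nonemptiness remark are harmless but redundant, since your final chain of equalities already yields $H = \mathbf{C}_G(U_H^*)$ outright.
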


\begin{proof}
Since $H \in \mathfrak{C}(G)$, we have 
$$H = \mathbf{C}_G(\mathbf{C}_G(H)) = \bigcap_{x \in \mathbf{C}_G(H)} \mathbf{C}_G(x).$$
Note that 
$$x \in \mathbf{C}_G(H) \iff H = \mathbf{C}_G(\mathbf{C}_G(H)) \subseteq \mathbf{C}_G(x).$$
Thus, $$H = \bigcap_{H \subseteq \mathbf{C}_G(x)} \mathbf{C}_G(x).$$

Because $X$ is a set of representatives under the relation $\sim$, we have 
$$H = \bigcap_{\substack{H \subseteq \mathbf{C}_G(x), \\ x \in X}} \mathbf{C}_G(x) = \bigcap_{x \in U_H^*} \mathbf{C}_G(x) = \mathbf{C}_G(U_H^*).$$
\end{proof}

The next proposition shows that $\mathfrak{F}_H^*$ and $U_H^*$ yield the same nice properties that we saw in Propositions \ref{prop: unique_cent} and \ref{prop:join_nice}.

\begin{proposition}\label{prop: stars}
Let $G$ be a group and $H,K \in \mathfrak{C}(G)$, and consider $\mathfrak{F}^*$ and $U^*$ with respect to a fixed representative set $X$ with respect to $\sim$.
\begin{enumerate}
\item We have $\mathfrak{F}_H^*$ is closed under arbitrary unions and $\displaystyle \bigcup_{T \in \mathfrak{F}_H^*} T = U_H^*$.
\item We have $U_{H \vee K}^* = U_H^* \cap U_K^*$.  In other words, if we write $H = \mathbf{C}_G(U_H^*)$ and $K = \mathbf{C}_G(U_K^*)$, then $H \vee K = \mathbf{C}_G(U_H^* \cap U_K^*)$.
\end{enumerate}
\end{proposition}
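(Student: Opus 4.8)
The plan is to prove (1) by mimicking the argument of Proposition~\ref{prop: unique_cent}, with the one extra ingredient that Proposition~\ref{prop: upper} identifies $U_H^*$ as a distinguished, largest member of $\mathfrak{F}_H^*$. First I would record that $U_H^* \in \mathfrak{F}_H^*$: this is exactly the content of Proposition~\ref{prop: upper} together with the observation that $U_H^* \subseteq X$ by definition. For closure under (nonempty) unions, I take $\mathfrak{W} \subseteq \mathfrak{F}_H^*$ and set $U = \bigcup_{W \in \mathfrak{W}} W$; then $U \subseteq X$, and by Proposition~\ref{prop: cent_union_int_1}(1) we get $\mathbf{C}_G(U) = \bigcap_{W \in \mathfrak{W}} \mathbf{C}_G(W) = \bigcap_{W \in \mathfrak{W}} H = H$, so $U \in \mathfrak{F}_H^*$. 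For the equality $\bigcup_{T \in \mathfrak{F}_H^*} T = U_H^*$, the inclusion $\supseteq$ is immediate since $U_H^* \in \mathfrak{F}_H^*$; for $\subseteq$, any $T \in \mathfrak{F}_H^*$ satisfies $T \subseteq X$ and, by Lemma~\ref{lem: contain}, $H = \mathbf{C}_G(T) \subseteq \mathbf{C}_G(t)$ for every $t \in T$, so $T \subseteq U_H^*$.

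For (2), the key claim is that for a fixed $x \in X$ one has $H \vee K \subseteq \mathbf{C}_G(x)$ if and only if both $H \subseteq \mathbf{C}_G(x)$ and $K \subseteq \mathbf{C}_G(x)$; granting this, intersecting the defining conditions with $X$ gives $U_{H\vee K}^* = U_H^* \cap U_K^*$. The forward direction is trivial because $H \subseteq H \vee K$ and $K \subseteq H \vee K$. For the converse I would invoke the description of $H \vee K$ (recorded in the text after Corollary~\ref{cor: bij}) as the intersection of all centralizers containing both $H$ and $K$: since $\mathbf{C}_G(x) \in \mathfrak{C}(G)$ and contains both $H$ and $K$, it contains this intersection. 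Alternatively, from $H \subseteq \mathbf{C}_G(x)$ and $K \subseteq \mathbf{C}_G(x)$ one applies $\mathbf{C}_G(\cdot)$ and uses extensivity of $\mathbf{C}_G(\mathbf{C}_G(\cdot))$ on $\{x\}$ to get $x \in \mathbf{C}_G(H) \cap \mathbf{C}_G(K)$, whence $H \vee K = \mathbf{C}_G(\mathbf{C}_G(H) \cap \mathbf{C}_G(K)) \subseteq \mathbf{C}_G(x)$ by Lemma~\ref{lem: contain}. Finally, the ``in other words'' reformulation follows by applying Proposition~\ref{prop: upper} to the centralizer $H \vee K$, namely $H \vee K = \mathbf{C}_G(U_{H \vee K}^*) = \mathbf{C}_G(U_H^* \cap U_K^*)$.

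I do not anticipate a serious obstacle; the only point requiring a little care is the degenerate empty-union case in (1) (the empty union is $\emptyset$ and $\mathbf{C}_G(\emptyset) = G$, so $\emptyset \in \mathfrak{F}_H^*$ precisely when $H = G$), which I would either exclude by phrasing the statement for nonempty unions or dispose of by noting that $\mathfrak{F}_G^*$ does contain $\emptyset$. The substantive content is otherwise pure bookkeeping with Lemma~\ref{lem: contain}, Proposition~\ref{prop: cent_union_int_1}, and Proposition~\ref{prop: upper}, exactly paralleling the proofs of Propositions~\ref{prop: unique_cent} and~\ref{prop:join_nice} but carried out inside the representative set $X$.
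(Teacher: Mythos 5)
Your proposal is correct and follows essentially the same route as the paper: item (1) mirrors the argument of Proposition~\ref{prop: unique_cent} using Proposition~\ref{prop: upper} and Proposition~\ref{prop: cent_union_int_1}, and your second argument for item (2) (passing from $H,K \subseteq \mathbf{C}_G(x)$ to $x \in \mathbf{C}_G(H) \cap \mathbf{C}_G(K)$ and applying Lemma~\ref{lem: contain}) is exactly the paper's Galois-connection computation, with your attention to the empty-union degenerate case being a minor extra refinement the paper leaves implicit.
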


\begin{proof}
For item (1), by Proposition \ref{prop: upper}, we have $H = \mathbf{C}_G(U_H^*)$, and $U_H^* \subseteq X$, so $U_H^* \in \mathfrak{F}_H^*$.  Thus $$U_H^* \subseteq \bigcup_{T \in \mathfrak{F}_H^*} T.$$ 

Now suppose that $\mathfrak{W} \subseteq \mathfrak{F}_H^*$ is arbitrary and let $\displaystyle U = \bigcup_{W \in \mathfrak{W}} W$.  By Proposition \ref{prop: cent_union_int_1}, $$ \mathbf{C}_G(U) = \mathbf{C}_G(\bigcup_{W \in \mathfrak{W}} W) =\bigcap_{W \in \mathfrak{W}} \mathbf{C}_G(W) = H,$$ and $U \subseteq X$, so $U \in \mathfrak{F}_H^*$, and $\mathfrak{F}_H^*$ is closed under arbitrary unions.  For each $u \in U$, $H = \mathbf{C}_G(U) \subseteq \mathbf{C}_G(u)$, and so $U \subseteq U_H^*$.  Taking $\mathfrak{W} = \mathfrak{F}_H^*$, we obtain $\displaystyle \bigcup_{T \in \mathfrak{F}_H^*} T \subseteq U_H^*$, and thus $\displaystyle \bigcup_{T \in \mathfrak{F}_H^*} T = U_H^*$.

For item (2), we have $\mathfrak{F}_{H \vee K}^* = \{ x \in X \, : \, H \vee K \subseteq \mathbf{C}_G(x) \}$. Note that $H \vee K \subseteq \mathbf{C}_G(x)$ if and only if $x \in \mathbf{C}_G(H \vee K) = \mathbf{C}_G(H) \cap \mathbf{C}_G(K)$.  Hence $$\mathfrak{F}_{H \vee K}^* = \{ x \in X \, : \, x \in \mathbf{C}_G(H) \,\, \text{and} \,\, x \in \mathbf{C}_G(K) \} = \mathfrak{F}_H^* \cap \mathfrak{F}_K^*.$$
\end{proof}

We now consider the case when $\langle S \rangle$ is abelian.

\begin{proposition} \label{prop: cent_ab}
Let $G$ be a group and $S \subseteq G$.  We have $\mathbf{C}_G(\mathbf{C}_G(S))$ is abelian if and only if $\langle S \rangle$ is abelian.
\end{proposition}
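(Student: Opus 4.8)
The plan is to prove both directions using the fundamental relations between $S$, $\langle S \rangle$, and the double-centralizer, together with the observation that a group is abelian precisely when it is contained in its own centralizer.

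First I would set $A = \mathbf{C}_G(\mathbf{C}_G(S))$ and recall from Lemma \ref{lem: sub} that $\mathbf{C}_G(S) = \mathbf{C}_G(\langle S \rangle)$, hence $A = \mathbf{C}_G(\mathbf{C}_G(\langle S \rangle))$ as well, and by extensivity of the closure operator $\langle S \rangle \subseteq A$ (indeed $S \subseteq A$). The key elementary fact I would isolate is: a subgroup $B \leq G$ is abelian if and only if $B \subseteq \mathbf{C}_G(B)$.

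For the forward direction, suppose $A$ is abelian. Since $\langle S \rangle \subseteq A$ and $A$ is abelian, every subgroup of $A$ is abelian, so $\langle S \rangle$ is abelian.

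For the reverse direction, suppose $\langle S \rangle$ is abelian, i.e. $\langle S \rangle \subseteq \mathbf{C}_G(\langle S \rangle) = \mathbf{C}_G(S)$. Applying $\mathbf{C}_G(\cdot)$ and using Lemma \ref{lem: contain}, we get $\mathbf{C}_G(\mathbf{C}_G(S)) \subseteq \mathbf{C}_G(\langle S \rangle) = \mathbf{C}_G(S)$, that is $A \subseteq \mathbf{C}_G(S)$. But also $\mathbf{C}_G(S) \subseteq \mathbf{C}_G(A)$: indeed, by Proposition \ref{prop: cent_basic}(1) (or Corollary \ref{cor: triple_C}), $A \in \mathfrak{C}(G)$ with $\mathbf{C}_G(A) = \mathbf{C}_G(S)$, so $\mathbf{C}_G(S) = \mathbf{C}_G(A)$. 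Combining, $A \subseteq \mathbf{C}_G(S) = \mathbf{C}_G(A)$, so $A$ is abelian by the elementary fact above.

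I do not anticipate a serious obstacle here; the only thing to be careful about is keeping straight which equalities require $S \in \mathfrak{C}(G)$ versus hold for arbitrary $S$ — here everything goes through for arbitrary $S$ because $A = \mathbf{C}_G(\mathbf{C}_G(S))$ is always a genuine centralizer (Corollary \ref{cor: triple_C}), so the symmetry $\mathbf{C}_G(A) = \mathbf{C}_G(S)$ is available without extra hypotheses. The reverse direction is the slightly more substantive one, as it requires the sandwiching $A \subseteq \mathbf{C}_G(S) = \mathbf{C}_G(A)$ rather than a one-line containment argument.
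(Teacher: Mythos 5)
Your proposal is correct and takes essentially the same route as the paper: both proofs rest on the fact that a subgroup is abelian precisely when it is contained in its own centralizer, together with $\mathbf{C}_G(\mathbf{C}_G(\mathbf{C}_G(S))) = \mathbf{C}_G(S)$, yielding the sandwich $A \subseteq \mathbf{C}_G(S) = \mathbf{C}_G(A)$ for $A = \mathbf{C}_G(\mathbf{C}_G(S))$. The only cosmetic difference is your forward direction, which uses extensivity plus the fact that subgroups of abelian groups are abelian instead of running the paper's chain of equivalences in reverse.
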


\begin{proof}    
We have $\langle S \rangle$ is abelian if and only if $S \subseteq \mathbf{C}_G(S)$ if and only if $\mathbf{C}_G(\mathbf{C}_G(S)) \subseteq \mathbf{C}_G(S)$ (by Proposition \ref{prop: cent_basic} item (2)).  And so $\langle S \rangle$ is abelian if and only if $$\mathbf{C}_G(\mathbf{C}_G(S)) \subseteq \mathbf{C}_G(S) = \mathbf{C}_G(\mathbf{C}_G(\mathbf{C}_G(S))),$$ i.e. if and only if $\mathbf{C}_G(\mathbf{C}_G(S))$ is abelian.
\end{proof}

We state the immediate corollary for element centralizers.

\begin{corollary}\label{cor: abelian_cent}
Let $G$ be a group and let $a \in G$.  Then $\mathbf{C}_G(\mathbf{C}_G(a))$ is abelian.
\end{corollary}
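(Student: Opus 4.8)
The plan is to derive Corollary \ref{cor: abelian_cent} directly from Proposition \ref{prop: cent_ab} by taking $S = \{a\}$, the singleton set. The key observation is that $\langle \{a\} \rangle = \langle a \rangle$ is a cyclic subgroup, and cyclic subgroups are abelian. Once we have that $\langle S \rangle = \langle a \rangle$ is abelian, Proposition \ref{prop: cent_ab} immediately tells us that $\mathbf{C}_G(\mathbf{C}_G(S)) = \mathbf{C}_G(\mathbf{C}_G(a))$ is abelian, which is exactly the claim.

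So the proof is essentially a one-line specialization: set $S = \{a\}$, note $\langle a \rangle$ is cyclic hence abelian, and invoke Proposition \ref{prop: cent_ab}. There is no real obstacle here — the substantive work was already done in proving Proposition \ref{prop: cent_ab}, whose argument hinges on the equivalence chain ``$\langle S \rangle$ abelian $\iff S \subseteq \mathbf{C}_G(S) \iff \mathbf{C}_G(\mathbf{C}_G(S)) \subseteq \mathbf{C}_G(S)$'' together with the triple-centralizer identity from Corollary \ref{cor: triple_C}. The only thing to be careful about is the trivial but necessary remark that a single element always generates an abelian (indeed cyclic) subgroup, so the hypothesis of Proposition \ref{prop: cent_ab} is automatically satisfied.

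I would write it as: ``Apply Proposition \ref{prop: cent_ab} with $S = \{a\}$. Since $\langle \{a\} \rangle = \langle a \rangle$ is cyclic, it is abelian, and therefore $\mathbf{C}_G(\mathbf{C}_G(a)) = \mathbf{C}_G(\mathbf{C}_G(\{a\}))$ is abelian.'' That is the entire argument — no main obstacle, just a clean instantiation of the preceding, more general proposition.
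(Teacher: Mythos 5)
Your proof is correct and is exactly the argument the paper intends: the paper states this as an immediate corollary of Proposition \ref{prop: cent_ab}, obtained precisely by taking $S=\{a\}$ and noting that $\langle a\rangle$ is cyclic, hence abelian. Nothing is missing.
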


Since $\mathbf{C}_G(\mathbf{C}_G(a))$ is abelian, we have $$\mathbf{C}_G(\mathbf{C}_G(a)) \subseteq \mathbf{C}_G(\mathbf{C}_G(\mathbf{C}_G(a))) = \mathbf{C}_G(a),$$ and hence $\mathbf{C}_G(\mathbf{C}_G(a)) = \mathbf{Z}(\mathbf{C}_G(a))$.  We denote $\mathbf{C}_G(\mathbf{C}_G(a))$ by $\mathbf{Z}(a)$, and we refer to these subgroups as \textit{element centers}.  We write $\mathcal{Z}(G) = \{ \mathbf{Z}(g) \mid g \in G \setminus \mathbf{Z}(G) \}$ for the set of all non-central element centers and we write $\mathcal{C}(G) = \{ \mathbf{C}_G(g) \mid g \in G \setminus \mathbf{Z}(G) \}$ for the set of all proper element centralizers.

We know by Proposition \ref{prop: equiv_C-CC} that $\mathbf{C}_G(x) = \mathbf{C}_G(y)$ if and only if $\mathbf{C}_G(\mathbf{C}_G(x)) = \mathbf{C}_G(\mathbf{C}_G(y))$, i.e. if and only if $\mathbf{Z}(x) = \mathbf{Z}(y)$.  We write $\mathbf{Z}^*(g)$ for the equivalence class of $g$ under this relation.  So $$\mathbf{Z}^*(g) = \{ x \in G \,\, | \,\, \mathbf{Z}(x) = \mathbf{Z}(g) \} = \{ x \in G \,\, | \,\, \mathbf{C}_G(x) = \mathbf{C}_G(g) \}.$$

Note that $\mathbf{Z}^*(z) = \mathbf{Z}(G)$ for $z \in \mathbf{Z}(G)$. 

\begin{proposition} \label{prop: g_in_Z}
Let $G$ be a group and let $g \in G$.  We have $\mathbf{Z}^*(g) \subseteq \mathbf{Z}(g)$. 
\end{proposition}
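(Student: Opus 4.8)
The plan is to unwind the definitions and appeal to extensivity of the closure operator $\mathbf{C}_G(\mathbf{C}_G(\cdot))$. Fix $g \in G$ and let $x \in \mathbf{Z}^*(g)$. By definition of the equivalence class, this means $\mathbf{C}_G(x) = \mathbf{C}_G(g)$, equivalently (by Proposition \ref{prop: equiv_C-CC}) $\mathbf{C}_G(\mathbf{C}_G(x)) = \mathbf{C}_G(\mathbf{C}_G(g))$, i.e.\ $\mathbf{Z}(x) = \mathbf{Z}(g)$.

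The key step is then simply that $\mathbf{C}_G(\mathbf{C}_G(\cdot))$ is extensive (it is a closure operator, established after Proposition \ref{prop: Galois}): applying this to the singleton $\{x\}$ gives $x \in \{x\} \subseteq \mathbf{C}_G(\mathbf{C}_G(x)) = \mathbf{Z}(x)$. Combining with $\mathbf{Z}(x) = \mathbf{Z}(g)$ yields $x \in \mathbf{Z}(g)$, so $\mathbf{Z}^*(g) \subseteq \mathbf{Z}(g)$ as claimed.

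Alternatively, and perhaps more transparently, one can argue directly without invoking the general closure machinery: every element of $\mathbf{C}_G(x)$ commutes with $x$ by definition of $\mathbf{C}_G(x)$, so $x$ commutes with every element of $\mathbf{C}_G(x)$; since $\mathbf{C}_G(x) = \mathbf{C}_G(g)$, the element $x$ commutes with every element of $\mathbf{C}_G(g)$, which is exactly the statement $x \in \mathbf{C}_G(\mathbf{C}_G(g)) = \mathbf{Z}(g)$. I would probably present this direct version, with a parenthetical remark that it is an instance of extensivity.

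There is no real obstacle here: the statement is a formal consequence of the definitions together with the already-established fact that $\mathbf{C}_G(\mathbf{C}_G(\cdot))$ is extensive, so the only care needed is to make sure the two descriptions of $\mathbf{Z}^*(g)$ and of $\mathbf{Z}(g)$ are correctly matched up. It is worth noting (though not part of the proof) that the containment is typically strict, since $\mathbf{Z}(g) = \mathbf{Z}(\mathbf{C}_G(g))$ is a subgroup while $\mathbf{Z}^*(g)$ is merely the set of its elements generating the same element centralizer.
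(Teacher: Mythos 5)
Your proof is correct and is essentially the paper's argument: the paper's one-line proof is ``if $x \in \mathbf{Z}^*(g)$, then $x \in \mathbf{Z}(x) = \mathbf{Z}(g)$,'' which is exactly your combination of extensivity ($x \in \mathbf{C}_G(\mathbf{C}_G(x))$) with $\mathbf{Z}(x) = \mathbf{Z}(g)$. Your direct version merely makes explicit the commuting argument the paper leaves implicit, so there is nothing further to add.
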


\begin{proof}
If $x \in \mathbf{Z}^*(g)$, then $x \in \mathbf{Z}(x) = \mathbf{Z}(g)$.
\end{proof}

We now come to the main theorem of this section, which states that every centralizer can be written as the disjoint union of the equivalence classes of the element centers that it contains.  This generalizes a result in \cite{cover} that makes this observation in finite groups for only the element centers themselves.

\begin{theorem} \label{thm: z_stars}
Let $G$ be a group and let $H \in \mathfrak{C}(G)$.  We have $$ H = \bigcup_{\mathbf{Z}(x) \subseteq H} \mathbf{Z}^*(x) = \bigcup_{\mathbf{C}_G(H) \subseteq \mathbf{C}_G(x)} \mathbf{Z}^*(x),$$ and this union is disjoint over distinct $\mathbf{Z}(x)$ (or over distinct $\mathbf{C}_G(x)$).
\end{theorem}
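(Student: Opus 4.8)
The plan is to prove the two displayed equalities and the disjointness separately, and then observe that the second equality is just a restatement of the first via the bijection in Proposition \ref{prop: cent_basic}(3) (for $H \in \mathfrak{C}(G)$, $\mathbf{Z}(x) \subseteq H \iff \mathbf{C}_G(H) \subseteq \mathbf{C}_G(\mathbf{Z}(x)) = \mathbf{C}_G(x)$, the last step using $\mathbf{C}_G(\mathbf{Z}(x)) = \mathbf{C}_G(\mathbf{C}_G(\mathbf{C}_G(x))) = \mathbf{C}_G(x)$ by Corollary \ref{cor: triple_C}). So the real content is $H = \bigcup_{\mathbf{Z}(x) \subseteq H} \mathbf{Z}^*(x)$.

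First I would prove disjointness, which is immediate: if $\mathbf{Z}^*(x) \cap \mathbf{Z}^*(y) \neq \emptyset$, pick $w$ in the intersection, so $\mathbf{Z}(w) = \mathbf{Z}(x)$ and $\mathbf{Z}(w) = \mathbf{Z}(y)$, whence $\mathbf{Z}(x) = \mathbf{Z}(y)$ and the classes coincide. Thus distinct element centers give disjoint $\mathbf{Z}^*$-classes, and the union in the statement, indexed over distinct $\mathbf{Z}(x) \subseteq H$, is genuinely disjoint.

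Next the containment $\bigcup_{\mathbf{Z}(x) \subseteq H} \mathbf{Z}^*(x) \subseteq H$. If $y \in \mathbf{Z}^*(x)$ for some $x$ with $\mathbf{Z}(x) \subseteq H$, then $y \in \mathbf{Z}(y) = \mathbf{Z}(x) \subseteq H$ by Proposition \ref{prop: g_in_Z}. For the reverse containment $H \subseteq \bigcup_{\mathbf{Z}(x) \subseteq H} \mathbf{Z}^*(x)$: let $h \in H$. Then $h \in \mathbf{Z}^*(h)$ trivially, so it suffices to check that $\mathbf{Z}(h) \subseteq H$, i.e. that $h$'s element center is one of the indices. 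Since $h \in H$ and $H \in \mathfrak{C}(G)$, Proposition \ref{prop: cent_basic}(3) gives $\mathbf{C}_G(H) \subseteq \mathbf{C}_G(h)$; applying $\mathbf{C}_G(\cdot)$ and using that $H = \mathbf{C}_G(\mathbf{C}_G(H)) \in \mathfrak{C}(G)$ together with $\mathbf{C}_G(\mathbf{C}_G(h)) = \mathbf{Z}(h)$ yields $\mathbf{Z}(h) = \mathbf{C}_G(\mathbf{C}_G(h)) \subseteq \mathbf{C}_G(\mathbf{C}_G(H)) = H$. Hence $h$ lies in the term indexed by $\mathbf{Z}(h)$, completing the equality.

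I do not expect a serious obstacle here; the only point requiring a little care is making sure the reductions $\mathbf{C}_G(\mathbf{C}_G(h)) = \mathbf{Z}(h)$ and $\mathbf{C}_G(\mathbf{Z}(x)) = \mathbf{C}_G(x)$ are invoked correctly (both follow from Corollary \ref{cor: triple_C} / the definition of $\mathbf{Z}(\cdot)$), and that the index set is phrased so that each distinct element center contributes exactly one class — so that "disjoint union" is literally true rather than just "union of pairwise disjoint or equal sets."
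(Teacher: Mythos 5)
Your proposal is correct and follows essentially the same route as the paper's proof: the duality between the conditions $\mathbf{Z}(x) \subseteq H$ and $\mathbf{C}_G(H) \subseteq \mathbf{C}_G(x)$, the containment $\mathbf{Z}^*(x) \subseteq \mathbf{Z}(x) \subseteq H$ for one direction, the observation $h \in \mathbf{Z}^*(h)$ for the other, and disjointness from the $\mathbf{Z}^*$-classes being equivalence classes. The only cosmetic difference is that you verify $\mathbf{Z}(h) \subseteq H$ directly for the reverse inclusion, whereas the paper indexes that inclusion by the condition $\mathbf{C}_G(H) \subseteq \mathbf{C}_G(h)$, which is immediate from $h \in H$; these are the same argument up to the duality you both invoke.
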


\begin{proof}
That $\displaystyle \bigcup_{\mathbf{Z}(x) \subseteq H} \mathbf{Z}^*(x) = \bigcup_{\mathbf{C}_G(H) \subseteq \mathbf{C}_G(x)} \mathbf{Z}^*(x)$ follows by Corollary \ref{cor: bij}.  

By Proposition \ref{prop: g_in_Z}, each such $\mathbf{Z}^*(x) \subseteq \mathbf{Z}(x) \subseteq H$, and so $\displaystyle \bigcup_{\mathbf{Z}(x) \subseteq H} \mathbf{Z}^*(x) \subseteq H$.

Conversely, let $g \in H$.  So $\mathbf{C}_G(H) \subseteq \mathbf{C}_G(g)$, and $g \in \mathbf{Z}^*(g)$, and thus $$g \in \bigcup_{\mathbf{C}_G(H) \subseteq \mathbf{C}_G(x)} \mathbf{Z}^*(x).$$  Hence $\displaystyle H \subseteq \bigcup_{\mathbf{C}_G(H) \subseteq \mathbf{C}_G(x)} \mathbf{Z}^*(x)$, and we have equality.

Distinct $\mathbf{Z}(x)$'s (or distinct $\mathbf{C}_G(x)$'s) yield distinct $\mathbf{Z}^*(x)$'s, and as the $\mathbf{Z}^*(x)$'s are equivalence classes, a union over distinct equivalence classes is a disjoint union.
\end{proof}

We state the corollary for finite nonabelian groups.

\begin{corollary} \label{cor: z_stars_fin}
Let $G$ be a finite nonabelian group and let $H \in \mathfrak{C}(G) \setminus \{ \mathbf{Z}(G) \}$.  Let $g_1, \dots, g_t$ be elements of $G \setminus \mathbf{Z}(G)$ such that $\mathbf{Z}(g_1), \dots, \mathbf{Z}(g_t)$ are distinct and comprise all of the non-central element centers contained in $H$ (equivalently, $\mathbf{C}_G(g_1), \dots, \mathbf{C}_G(g_t)$ are distinct and comprise all of the proper element centralizers that contain $\mathbf{C}_G(H)$).  We have $\displaystyle H = (\bigcup_{i=1}^t \mathbf{Z}^*(g_i) )\cup \mathbf{Z}(G)$, and this union is disjoint.
\end{corollary}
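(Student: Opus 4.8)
The plan is to deduce this directly from Theorem \ref{thm: z_stars} by isolating the central contribution to the union. First I would apply Theorem \ref{thm: z_stars} to the centralizer $H$, which gives
\[
H = \bigcup_{\mathbf{Z}(x) \subseteq H} \mathbf{Z}^*(x),
\]
a disjoint union indexed by the distinct element centers $\mathbf{Z}(x)$ contained in $H$. Since $G$ is finite, there are only finitely many element centralizers and hence only finitely many element centers, so this index set is finite; this is exactly what makes the list $g_1, \dots, g_t$ a finite list.

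Next I would pin down which element centers occur. For a central element $z \in \mathbf{Z}(G)$ we have $\mathbf{Z}(z) = \mathbf{C}_G(\mathbf{C}_G(z)) = \mathbf{C}_G(G) = \mathbf{Z}(G)$, and $\mathbf{Z}(G) \subseteq H$ because every centralizer contains the center; moreover $\mathbf{Z}^*(z) = \mathbf{Z}(G)$ by the remark preceding Proposition \ref{prop: g_in_Z}. Conversely, if $x \in G$ satisfies $\mathbf{Z}(x) = \mathbf{Z}(G)$, then applying $\mathbf{C}_G(\cdot)$ and reducing with Corollary \ref{cor: triple_C} yields $\mathbf{C}_G(x) = \mathbf{C}_G(\mathbf{Z}(G)) = G$, so $x \in \mathbf{Z}(G)$. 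Thus among the element centers contained in $H$, exactly one is ``central'', namely $\mathbf{Z}(G)$, with equivalence class $\mathbf{Z}^*(z) = \mathbf{Z}(G)$; all the remaining ones are noncentral, and (using Corollary \ref{cor: bij} for the parenthetical reformulation) these are precisely $\mathbf{Z}(g_1), \dots, \mathbf{Z}(g_t)$.

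I would then split the disjoint union of Theorem \ref{thm: z_stars} along this dichotomy to obtain
\[
H = \mathbf{Z}^*(z) \cup \bigcup_{i=1}^{t} \mathbf{Z}^*(g_i) = \mathbf{Z}(G) \cup \bigcup_{i=1}^{t} \mathbf{Z}^*(g_i).
\]
Disjointness is inherited: the classes $\mathbf{Z}^*(g_i)$ correspond to the pairwise distinct element centers $\mathbf{Z}(g_i)$, none of which equals $\mathbf{Z}(G)$ (as each $g_i$ is noncentral), so they are distinct $\sim$-classes, hence pairwise disjoint and disjoint from $\mathbf{Z}(G)$. To confirm the list is nonempty ($t \geq 1$), I would use $H \neq \mathbf{Z}(G)$: choosing $g \in H \setminus \mathbf{Z}(G)$ and writing $H = \mathbf{C}_G(S)$, the containment $g \in H$ gives $S \subseteq \mathbf{C}_G(g)$, hence $\mathbf{Z}(g) = \mathbf{C}_G(\mathbf{C}_G(g)) \subseteq \mathbf{C}_G(S) = H$, so $\mathbf{Z}(g)$ is a noncentral element center inside $H$.

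There is no real obstacle here; the mathematical content lies entirely in Theorem \ref{thm: z_stars}. The only points needing care are the bookkeeping observations that $\mathbf{Z}(x) = \mathbf{Z}(G) \iff x \in \mathbf{Z}(G)$ (so the central part of the union collapses to the single class $\mathbf{Z}(G)$) and that finiteness of $G$ bounds the number of noncentral element centers.
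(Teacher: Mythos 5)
Your proposal is correct and matches the paper's intent: the corollary is stated there as an immediate consequence of Theorem \ref{thm: z_stars}, and your argument is exactly the expected bookkeeping — splitting off the single central class via $\mathbf{Z}(x)=\mathbf{Z}(G)\iff x\in\mathbf{Z}(G)$ and $\mathbf{Z}^*(z)=\mathbf{Z}(G)$ for central $z$, with finiteness giving the finite list. No gaps.
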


\section{The M\"{o}bius function and an application to \texorpdfstring{$p$}{p}-groups}\label{sec: Moby}

In this section, we define the M\"obius function on a poset and apply it to the poset of element centers in a $p$-group.   We begin with an important lemma.

\begin{lemma}\label{lem: cen_divides_stars}
Let $G$ be a finite group.  For each $g \in G$, we can express $\mathbf{Z}^*(g)$ as a union of cosets of $\mathbf{Z}(G)$ in $G$.  Hence, for each $g \in G$, $|\mathbf{Z}(G)|$ divides $|\mathbf{Z}^*(g)|$, and $\displaystyle \frac{|\mathbf{Z}^*(g)|}{|\mathbf{Z}(G)|}$ is the number of elements of a transversal for $\mathbf{Z}(G)$ in $G$ that lie in $\mathbf{Z}^*(g)$.
\end{lemma}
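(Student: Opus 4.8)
The plan is to show that $\mathbf{Z}^*(g)$ is a union of cosets of $\mathbf{Z}(G)$ by proving that membership in $\mathbf{Z}^*(g)$ is invariant under multiplication by central elements. Recall $\mathbf{Z}^*(g) = \{ x \in G \mid \mathbf{C}_G(x) = \mathbf{C}_G(g) \}$. So first I would take $x \in \mathbf{Z}^*(g)$ and $z \in \mathbf{Z}(G)$, and argue that $\mathbf{C}_G(xz) = \mathbf{C}_G(x)$. The key observation is that for any $y \in G$, the element $y$ commutes with $xz$ if and only if $y$ commutes with $x$: indeed $z$ is central so $y(xz) = (yx)z$ and $(xz)y = x(zy) = x(yz) = (xy)z$, hence $y(xz) = (xz)y \iff (yx)z = (xy)z \iff yx = xy$ (cancelling $z$). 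Therefore $\mathbf{C}_G(xz) = \mathbf{C}_G(x) = \mathbf{C}_G(g)$, so $xz \in \mathbf{Z}^*(g)$. This shows $\mathbf{Z}^*(g)$ is closed under right multiplication by $\mathbf{Z}(G)$, hence is a union of right cosets of $\mathbf{Z}(G)$.

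From there the divisibility statement is immediate: if $\mathbf{Z}^*(g) = \bigcup_{i} \mathbf{Z}(G) x_i$ is a disjoint union of $k$ distinct cosets (distinct since cosets of a subgroup partition the group), then $|\mathbf{Z}^*(g)| = k \cdot |\mathbf{Z}(G)|$, so $|\mathbf{Z}(G)|$ divides $|\mathbf{Z}^*(g)|$ and the quotient is $k$. Finally, fixing a transversal (set of coset representatives) $T$ for $\mathbf{Z}(G)$ in $G$, each of these $k$ cosets contains exactly one element of $T$, and conversely each element of $T \cap \mathbf{Z}^*(g)$ lies in one of these cosets; so $k = |T \cap \mathbf{Z}^*(g)|$, which is precisely the asserted count.

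I anticipate essentially no serious obstacle here — the whole argument is a short computation with central elements plus the elementary fact that cosets partition the group. The only thing to be slightly careful about is whether to phrase things with left or right cosets; since $\mathbf{Z}(G)$ is normal (indeed central) this distinction is immaterial, but I would state it consistently (say, right cosets, matching "transversal for $\mathbf{Z}(G)$ in $G$"). It is also worth noting explicitly that the statement includes the case $g \in \mathbf{Z}(G)$, where $\mathbf{Z}^*(g) = \mathbf{Z}(G)$ is a single coset, so the claim holds trivially there too.
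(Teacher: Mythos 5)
Your proposal is correct and follows the same route as the paper's proof: show $\mathbf{C}_G(xz)=\mathbf{C}_G(x)$ for central $z$, conclude $\mathbf{Z}^*(g)$ is a union of cosets of $\mathbf{Z}(G)$, and then read off the divisibility and transversal count from the coset partition. You merely spell out the commutation computation that the paper states without detail, so the two arguments are essentially identical.
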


\begin{proof}
Note that if $x \in \mathbf{Z}^*(g)$, then $x \mathbf{Z}(G) \subseteq \mathbf{Z}^*(g)$.  This is true because $\mathbf{C}_G(x) = \mathbf{C}_G(xz)$ for every $z \in \mathbf{Z}(G)$.  Thus, for each $g \in G$, we can express $\mathbf{Z}^*(g)$ as a union of cosets of $\mathbf{Z}(G)$ in $G$. 

The remaining claims follow easily as cosets are disjoint and each coset has size $|\mathbf{Z}(G)|$.
\end{proof}

Let $\mathcal{P}$ be a finite poset, and suppose $\mathcal{P}$ has a unique minimal element, say $\hat{0}$.  The \textit{M\"{o}bius function on $\mathcal{P}$}, $\mu : \mathcal{P} \rightarrow \mathbb{Z}$ (see \cite{stan}) can be defined recursively by

\[ \mu(x) = \begin{cases} 
      1 & \text{ if } x = \hat{0}, \\
      \displaystyle -\sum_{y < x} \mu(y) & \text{ if } x > \hat{0}. 
   \end{cases}
\]

We now apply the M\"obius function to $\mathcal{Z} (G) \cup \{ \mathbf{Z}(G) \}$ for $G$ a $p$-group and obtain our first main result of this section.
\begin{proposition} \label{prop: p-Mob}
Let $G$ be a $p$-group and let $\mathcal{Z}(G) = \{ \mathbf{Z}(g) \,\, | \,\, g \in G \setminus \mathbf{Z}(G) \}$.  Let $\mu$ be the M\"{o}bius function defined on the poset $\mathcal{Z}(G) \cup \{ \mathbf{Z}(G) \}$ under containment.  For each $g \in G$, we have $$\frac{|\mathbf{Z}^*(g)|}{|\mathbf{Z}(G)|} \equiv \mu(\mathbf{Z}(g)) \bmod p.$$ 
\end{proposition}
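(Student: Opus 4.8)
**Proof proposal for Proposition 4.6 ($\frac{|\mathbf{Z}^*(g)|}{|\mathbf{Z}(G)|} \equiv \mu(\mathbf{Z}(g)) \bmod p$).**

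The plan is to induct on the poset $\mathcal{Z}(G) \cup \{\mathbf{Z}(G)\}$ from the bottom up, the bottom element being $\hat 0 = \mathbf{Z}(G)$ (note $\mathbf{Z}(G) = \mathbf{Z}(z)$ for $z \in \mathbf{Z}(G)$, and it is contained in every element center, so it is indeed the unique minimal element). The base case is $\mathbf{Z}(g) = \mathbf{Z}(G)$, i.e. $g$ central: then $\mathbf{Z}^*(g) = \mathbf{Z}(G)$ by the remark before Proposition~\ref{prop: g_in_Z}, so $|\mathbf{Z}^*(g)|/|\mathbf{Z}(G)| = 1 = \mu(\hat 0)$, and $1 \equiv 1 \bmod p$. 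For the inductive step, fix $g$ with $\mathbf{Z}(g) \supsetneq \mathbf{Z}(G)$ and set $H = \mathbf{C}_G(g)$, a proper centralizer since $g$ is noncentral. The key is to count $|H|$ two different ways modulo $p$.

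First, apply Theorem~\ref{thm: z_stars} to the centralizer $H = \mathbf{C}_G(g) \in \mathfrak{C}(G)$: we get $H = \bigcup_{\mathbf{Z}(x) \subseteq H} \mathbf{Z}^*(x)$, a disjoint union over the distinct element centers contained in $H$. Now $\mathbf{Z}(x) \subseteq H = \mathbf{C}_G(g)$ is equivalent (by Corollary~\ref{cor: bij}) to $\mathbf{C}_G(H) = \mathbf{Z}(g) \subseteq \mathbf{C}_G(x)$, hence — applying $\mathbf{C}_G$ once more and using that both sides lie in $\mathfrak{C}(G)$ — to $\mathbf{Z}(x) \subseteq \mathbf{Z}(g)$. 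So the element centers appearing are exactly those $\mathbf{Z}(x)$ with $\mathbf{Z}(x) \subseteq \mathbf{Z}(g)$, i.e. $\mathbf{Z}(x) = \mathbf{Z}(g)$ or $\mathbf{Z}(x) < \mathbf{Z}(g)$ in the poset. Therefore
\[
|H| = |\mathbf{Z}^*(g)| + \sum_{\mathbf{Z}(x) < \mathbf{Z}(g)} |\mathbf{Z}^*(x)|,
\]
the inner sum over distinct element centers strictly below $\mathbf{Z}(g)$ (including $\mathbf{Z}(G)$ itself, which contributes $|\mathbf{Z}(G)|$). Divide through by $|\mathbf{Z}(G)|$ — legitimate by Lemma~\ref{lem: cen_divides_stars} — to obtain $|H|/|\mathbf{Z}(G)| = |\mathbf{Z}^*(g)|/|\mathbf{Z}(G)| + \sum_{\mathbf{Z}(x) < \mathbf{Z}(g)} |\mathbf{Z}^*(x)|/|\mathbf{Z}(G)|$.

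Now reduce mod $p$. Since $G$ is a $p$-group, $\mathbf{Z}(G) \lneq H$ is a proper containment of $p$-subgroups, so $p \mid [H : \mathbf{Z}(G)] = |H|/|\mathbf{Z}(G)|$, i.e. the left side is $\equiv 0 \bmod p$. For each term with $\mathbf{Z}(x) < \mathbf{Z}(g)$ we have $\mathbf{Z}(x) \subsetneq \mathbf{Z}(g)$, and $\mathbf{Z}(x)$ is lower in the poset, so the inductive hypothesis gives $|\mathbf{Z}^*(x)|/|\mathbf{Z}(G)| \equiv \mu(\mathbf{Z}(x)) \bmod p$. Substituting,
\[
0 \equiv \frac{|\mathbf{Z}^*(g)|}{|\mathbf{Z}(G)|} + \sum_{\mathbf{Z}(x) < \mathbf{Z}(g)} \mu(\mathbf{Z}(x)) \pmod p.
\]
By the recursive definition of $\mu$, the sum $\sum_{\mathbf{Z}(x) < \mathbf{Z}(g)} \mu(\mathbf{Z}(x)) = -\mu(\mathbf{Z}(g))$, so $|\mathbf{Z}^*(g)|/|\mathbf{Z}(G)| \equiv \mu(\mathbf{Z}(g)) \bmod p$, completing the induction.

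The main obstacle is the bookkeeping in identifying the index set: one must be careful that the disjoint union in Theorem~\ref{thm: z_stars} is indexed by \emph{distinct} element centers, that the condition $\mathbf{Z}(x) \subseteq H$ translates precisely to $\mathbf{Z}(x) \subseteq \mathbf{Z}(g)$ (so that the sum matches the M\"obius recursion for $\mathbf{Z}(g)$ exactly, with no stray or missing terms), and that $\mathbf{Z}(G)$ sits in the poset as $\hat 0$ with $\mu(\hat 0) = 1$ consistent with the base case. Everything else — the divisibility $p \mid [H:\mathbf{Z}(G)]$ and the division by $|\mathbf{Z}(G)|$ — is routine given the earlier lemmas.
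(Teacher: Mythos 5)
There is a genuine gap at the key step identifying the index set of the decomposition. You apply Theorem \ref{thm: z_stars} to $H = \mathbf{C}_G(g)$ and claim that $\mathbf{Z}(x) \subseteq \mathbf{C}_G(g)$ is equivalent to $\mathbf{Z}(x) \subseteq \mathbf{Z}(g)$. The first half of your chain is fine ($\mathbf{Z}(x) \subseteq \mathbf{C}_G(g) \iff \mathbf{Z}(g) \subseteq \mathbf{C}_G(x)$, by the Galois duality), but applying $\mathbf{C}_G(\cdot)$ once more to $\mathbf{Z}(g) \subseteq \mathbf{C}_G(x)$ gives $\mathbf{Z}(x) \subseteq \mathbf{C}_G(\mathbf{Z}(g)) = \mathbf{C}_G(g)$ --- you land back where you started, not at $\mathbf{Z}(x) \subseteq \mathbf{Z}(g)$. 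In fact $\mathbf{Z}(x) \subseteq \mathbf{C}_G(g)$ is equivalent to $x \in \mathbf{C}_G(g)$, so the element centers appearing in the decomposition of $\mathbf{C}_G(g)$ are the centers of \emph{all} elements commuting with $g$; these all lie in $\mathbf{Z}(g)$ only when $\mathbf{C}_G(g)$ is abelian (otherwise $\mathbf{C}_G(g) \subseteq \mathbf{Z}(g)$ would force $\mathbf{C}_G(g) = \mathbf{Z}(g)$). Concretely, in the $2$-group $G = D_8 \times D_8$ with $g = (a,1)$ one has $\mathbf{C}_G(g) = \langle a \rangle \times D_8$ and $\mathbf{Z}(g) = \langle a \rangle \times \langle a^2 \rangle$, while $x = (1,b)$ gives $\mathbf{Z}(x) = \langle a^2 \rangle \times \langle a^2, b \rangle \subseteq \mathbf{C}_G(g)$ but $\mathbf{Z}(x) \nsubseteq \mathbf{Z}(g)$. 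So your displayed count $|H| = |\mathbf{Z}^*(g)| + \sum_{\mathbf{Z}(x) < \mathbf{Z}(g)} |\mathbf{Z}^*(x)|$ with $H = \mathbf{C}_G(g)$ omits terms whenever $\mathbf{C}_G(g)$ is nonabelian, and the sum you would actually get does not match the M\"obius recursion for $\mathbf{Z}(g)$; what it matches is Theorem \ref{thm: Mob}(1) for $H = \mathbf{C}_G(g)$, which the paper deduces \emph{from} this proposition, not the other way around.

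The repair is small and turns your argument into the paper's: apply the disjoint decomposition to $H = \mathbf{Z}(g) = \mathbf{C}_G(\mathbf{C}_G(g)) \in \mathfrak{C}(G)$ rather than to $\mathbf{C}_G(g)$. Then the indexing condition of Theorem \ref{thm: z_stars} is literally $\mathbf{Z}(x) \subseteq \mathbf{Z}(g)$, so
$$|\mathbf{Z}(g)| = |\mathbf{Z}^*(g)| + \sum_{\mathbf{Z}(x) \subsetneq \mathbf{Z}(g)} |\mathbf{Z}^*(x)|,$$
and the rest of your computation goes through verbatim: divide by $|\mathbf{Z}(G)|$ (Lemma \ref{lem: cen_divides_stars}), note $p$ divides $|\mathbf{Z}(g):\mathbf{Z}(G)|$ because $g \notin \mathbf{Z}(G)$ forces $\mathbf{Z}(G) \subsetneq \mathbf{Z}(g)$, apply the inductive hypothesis to the strictly smaller element centers, and invoke the recursive definition of $\mu$. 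Your base case and induction scheme are fine as stated.
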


\begin{proof}
The proof is by induction on the number of element centers properly contained in an element center $\mathbf{Z}(g)$.  The base case is when $g \in \mathbf{Z}(G)$ and so $\mathbf{Z}^*(g) = \mathbf{Z}(G)$, and the result holds since $\mu(\mathbf{Z}(G)) = 1$.

Let $g \in G \setminus \mathbf{Z}(G)$.  Let $g_1, \dots, g_t$ be elements of $G \setminus \mathbf{Z}(G)$ such that $\mathbf{Z}(g_1), \dots, \mathbf{Z}(g_t)$ are distinct and comprise all of the element centers properly contained in $\mathbf{Z}(g)$, where $t \geq 0$.  By Corollary \ref{cor: z_stars_fin}, $$\mathbf{Z}(g) = \mathbf{Z}^*(g) \cup (\bigcup_{i=1}^t \mathbf{Z}^*(g_i) )\cup \mathbf{Z}(G),$$ and this union is disjoint.

By the induction hypothesis, for each $i$, $$k_i = \frac{ |\mathbf{Z}^*(g_i)|}{|\mathbf{Z}(G)|} \equiv \mu(\mathbf{Z}(g_i)) \bmod p.$$ Hence, $$|\mathbf{Z}^*(g)| = |\mathbf{Z}(g)| - \big(\sum_{i=1}^t |\mathbf{Z}(G)|k_i \big) - |\mathbf{Z}(G)|.$$

Now, $\mathbf{Z}(G)$ is a proper subgroup of $\mathbf{Z}(g)$, and we obtain 

$$\frac{|\mathbf{Z}^*(g)|}{|\mathbf{Z}(G)|} = |\mathbf{Z}(g) : \mathbf{Z}(G)| - \big(\sum_{i=1}^t k_i \big) - 1,$$ where $p$ divides $|\mathbf{Z}(g) : \mathbf{Z}(G)|$.

Hence, $$\frac{|\mathbf{Z}^*(g)|}{|\mathbf{Z}(G)|} \equiv - \big(\sum_{i=1}^t \mu(\mathbf{Z}(g_i))\big) - 1 \bmod p.$$

And $\displaystyle \mu(\mathbf{Z}(g)) = -\Big(\big(\sum_{i=1}^t \mu(\mathbf{Z}(g_i))\big) + 1\Big)$, since the $\mathbf{Z}(g_i)$'s and $\mathbf{Z}(G)$ are all of the distinct element centers that are properly contained in $\mathbf{Z}(g)$.  Hence $$\frac{|\mathbf{Z}^*(g)|}{|\mathbf{Z}(G)|} \equiv \mu(\mathbf{Z}(g)) \bmod p.$$ 
\end{proof}

We come to a main result in this section. View the poset of element centers in a group, $G$, as a subposet of the lattice of all centralizers, $\mathfrak{C}(G)$.  For a $p$-group, $G$, and for any centralizer $H \in \mathfrak{C}(G)$, we apply the M\"obius function, $\mu$, to the poset of element centers and we obtain a result on the sum of $\mu$ over all non-central element centers contained in $H$.

\begin{theorem} \label{thm: Mob}
 Let $G$ be a nonabelian $p$-group and let $\mathcal{Z}(G) = \{ \mathbf{Z}(g) \mid g \in G \setminus \mathbf{Z}(G) \}$ and let $\mathcal{C}(G) = \{ \mathbf{C}_G(g) \mid g \in G \setminus \mathbf{Z}(G) \}$.  Let $\mu$ be the M\"{o}bius function defined on the poset $\mathcal{Z}(G) \cup \{ \mathbf{Z}(G) \}$ under containment.  
 \begin{enumerate}
\item For each $H \in \mathfrak{C}(G)$ properly containing $\mathbf{Z}(G)$, we have $$\sum_{\substack{Z \in \mathcal{Z}(G), \\ Z \subseteq H}} \mu(Z) \equiv -1 \bmod p.$$  
\item For each $H \in \mathfrak{C}(G)$ that is properly contained in $G$, we have $$\sum_{\substack{C \in \mathcal{C}(G), \\ H \subseteq C}} \mu(\mathbf{Z}(C)) \equiv -1 \bmod p.$$
\end{enumerate}
\end{theorem}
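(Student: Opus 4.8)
The plan is to derive part (1) as a nearly immediate consequence of Proposition~\ref{prop: p-Mob} together with the disjoint-union decomposition of Corollary~\ref{cor: z_stars_fin}, and then to obtain part (2) from part (1) by transporting everything across the order-reversing bijection $\mathbf{C}_G(\cdot)$ on $\mathfrak{C}(G)$ supplied by Corollary~\ref{cor: bij}.

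For part (1), I would fix $H \in \mathfrak{C}(G)$ with $\mathbf{Z}(G) \subsetneq H$ and take $g_1,\dots,g_t \in G \setminus \mathbf{Z}(G)$ as in Corollary~\ref{cor: z_stars_fin}, so that $\mathbf{Z}(g_1),\dots,\mathbf{Z}(g_t)$ are exactly the distinct non-central element centers contained in $H$ and $H = \mathbf{Z}(G) \cup \bigcup_{i=1}^t \mathbf{Z}^*(g_i)$ is a disjoint union. Counting and dividing by $|\mathbf{Z}(G)|$ (legitimate since $|\mathbf{Z}(G)|$ divides each $|\mathbf{Z}^*(g_i)|$ by Lemma~\ref{lem: cen_divides_stars}) gives $|H : \mathbf{Z}(G)| = 1 + \sum_{i=1}^t |\mathbf{Z}^*(g_i)|/|\mathbf{Z}(G)|$. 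By Proposition~\ref{prop: p-Mob} each summand is $\equiv \mu(\mathbf{Z}(g_i)) \bmod p$, and since $G$ is a $p$-group with $\mathbf{Z}(G) \subsetneq H$, the index $|H : \mathbf{Z}(G)|$ is a positive power of $p$, hence $\equiv 0 \bmod p$. Rearranging yields $\sum_{i=1}^t \mu(\mathbf{Z}(g_i)) \equiv -1 \bmod p$, which is the claim since the $\mathbf{Z}(g_i)$ enumerate $\{Z \in \mathcal{Z}(G) : Z \subseteq H\}$.

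For part (2), I would fix $H \in \mathfrak{C}(G)$ with $H \subsetneq G$ and apply $\mathbf{C}_G(\cdot)$: by Corollary~\ref{cor: bij} this map is an order-reversing bijection on $\mathfrak{C}(G)$, so $\mathbf{Z}(G) = \mathbf{C}_G(G) \subsetneq \mathbf{C}_G(H)$. For $g \in G \setminus \mathbf{Z}(G)$ and $C = \mathbf{C}_G(g) \in \mathcal{C}(G)$ one has $\mathbf{Z}(C) = \mathbf{C}_G(\mathbf{C}_G(g)) = \mathbf{Z}(g) \in \mathcal{Z}(G)$, and by Proposition~\ref{prop: cent_basic}(3), $H \subseteq C \iff \mathbf{C}_G(C) = \mathbf{Z}(g) \subseteq \mathbf{C}_G(H)$; moreover $g \mapsto \mathbf{C}_G(g)$ and $g \mapsto \mathbf{Z}(g)$ have the same fibers by Proposition~\ref{prop: equiv_C-CC}, so $C \mapsto \mathbf{Z}(C)$ is a bijection from $\{C \in \mathcal{C}(G) : H \subseteq C\}$ onto $\{Z \in \mathcal{Z}(G) : Z \subseteq \mathbf{C}_G(H)\}$. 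Consequently $\sum_{C} \mu(\mathbf{Z}(C)) = \sum_{Z \subseteq \mathbf{C}_G(H)} \mu(Z) \equiv -1 \bmod p$ by part (1) applied to the centralizer $\mathbf{C}_G(H) \supsetneq \mathbf{Z}(G)$.

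I expect the only genuinely delicate point to be the bookkeeping in part (2): one must be sure the index sets of the two Möbius sums correspond bijectively under $\mathbf{C}_G(\cdot)$, that $\mathbf{Z}(C)$ really equals $\mathbf{Z}(g)$ for $C = \mathbf{C}_G(g)$, and that the hypothesis $H \subsetneq G$ translates (via injectivity of $\mathbf{C}_G(\cdot)$ on $\mathfrak{C}(G)$) into the strict containment $\mathbf{Z}(G) \subsetneq \mathbf{C}_G(H)$ needed to invoke part (1). Everything else is immediate from results already in hand.
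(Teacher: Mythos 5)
Your proposal is correct and follows essentially the same route as the paper: part (1) via the disjoint-union decomposition of Corollary~\ref{cor: z_stars_fin}, division by $|\mathbf{Z}(G)|$, and Proposition~\ref{prop: p-Mob}, with $p \mid |H:\mathbf{Z}(G)|$ supplying the $-1$; part (2) by the duality of Corollary~\ref{cor: bij}. The only difference is that you spell out the duality bookkeeping (the bijection of index sets, $\mathbf{Z}(\mathbf{C}_G(g)) = \mathbf{Z}(g)$, and $H \subsetneq G \Rightarrow \mathbf{Z}(G) \subsetneq \mathbf{C}_G(H)$) that the paper leaves implicit, and those checks are all correct.
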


\begin{proof}
We prove item (1), and item (2) follows by the duality of $\mathfrak{C}(G)$ (Corollary \ref{cor: bij}).

Let $g_1, \dots, g_t$ be elements of $G \setminus \mathbf{Z}(G)$ such that $\mathbf{Z}(g_1), \dots, \mathbf{Z}(g_t)$ are distinct and comprise all of the non-central element centers contained in $H$.

Since the corresponding $\mathbf{Z}^*$'s along with $\mathbf{Z}(G)$ form a partition of $H$ (Corollary \ref{cor: z_stars_fin}), we have $$|H| = \big( \sum_{i=1}^t |\mathbf{Z}^*(g_i)| \big) + |\mathbf{Z}(G)|,$$ and hence $$|H : \mathbf{Z}(G)| = \Big( \sum_{i=1}^t \frac{|\mathbf{Z}^*(g_i)|}{|\mathbf{Z}(G)|} \Big) + 1.$$ 

As $H$ properly contains $\mathbf{Z}(G)$, $p$ divides $|H : \mathbf{Z}(G)|$.  Thus, by Proposition \ref{prop: p-Mob}, we obtain $$\sum_{\substack{Z \in \mathcal{Z}(G), \\ Z \subseteq C}} \mu(Z) \equiv -1 \bmod p.$$
\end{proof}

We now consider $F$-groups. Expanding on the work of It\^o in \cite{ito}, Rebmann in \cite{reb} defines a group $G$ to be an {\it $F$-group} if for all $x,y \in G \setminus \mathbf{Z}(G)$ the condition $\mathbf{C}_G(x) \subseteq \mathbf{C}_G (y)$ implies $\mathbf{C}_G (x) = \mathbf{C}_G (y)$.  Equivalently, by duality, a group $G$ is an $F$-group if for all $x,y \in G \setminus \mathbf{Z}(G)$, $\mathbf{Z}(x) \subseteq \mathbf{Z}(y)$ implies $\mathbf{Z}(x) = \mathbf{Z}(y)$.

It may come as a surprise that not all groups are $F$-groups.  Consider, for example, a direct product $G$ of nonabelian groups $G_1,\dots,G_n$ with respective elements $g_i \in G_i \setminus \mathbf{Z}(G_i)$ for all $i$.  Then we have a chain of proper element centralizers $$\mathbf{C}_G(g_1) \supset \mathbf{C}_G(g_1 g_2) \dots \supset \mathbf{C}_G(g_1 \cdots g_n).$$

We mention that Theorem \ref{thm: Mob} generalizes a result in \cite{cover} regarding $F$-groups that are $p$-groups.  If $G$ is an $F$-group, then each $Z \in \mathcal{Z}(G)$ has $\mu(Z) = -1$, where $\mu$ is the M\"obius function defined as in Theorem \ref{thm: Mob}.  Taking $H = G$ in Theorem \ref{thm: Mob} (1), it follows that an $F$-group, $G$, that is a $p$-group has $|\mathcal{Z}(G)| \equiv 1 \bmod p$.  This result found in \cite{cover} is itself a generalization of a result of Verardi, see \cite{verardi}.  We add to this in the below corollary.

\begin{corollary}\label{cor: non_ab_F-Gp}
Suppose $G$ is a nonabelian $F$-group that is a $p$-group.  Let $\mathcal{Z}(G) = \{ \mathbf{Z}(g) \mid g \in G \setminus \mathbf{Z}(G) \}$ and let $\mathcal{C}(G) = \{ \mathbf{C}_G(g) \mid g \in G \setminus \mathbf{Z}(G) \}$.
\begin{enumerate}
\item If $H \in \mathcal{C}(G)$, then number of non-central element centers contained in $H$ is congruent to $1$ modulo $p$. 
\item If $H \in \mathcal{Z}(G)$, then the number of proper element centralizers that contain $H$ is congruent to $1$ modulo $p$. 
\end{enumerate}
\end{corollary}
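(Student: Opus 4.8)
The plan is to derive both parts as immediate consequences of Theorem~\ref{thm: Mob}, the only extra ingredient being the value of the M\"obius function on $\mathcal{Z}(G)$ for an $F$-group. First I would recall the fact (already noted in the paragraph preceding this corollary) that when $G$ is an $F$-group the poset $\mathcal{Z}(G) \cup \{\mathbf{Z}(G)\}$ is an antichain $\mathcal{Z}(G)$ sitting above the single bottom element $\mathbf{Z}(G)$: indeed $\mathbf{Z}(G)$ is contained in every element center, and the $F$-group hypothesis is exactly that no two members of $\mathcal{Z}(G)$ are comparable. The M\"obius recursion then gives $\mu(\mathbf{Z}(G)) = 1$ and $\mu(Z) = -\mu(\mathbf{Z}(G)) = -1$ for every $Z \in \mathcal{Z}(G)$.

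For item (1) I would take $H = \mathbf{C}_G(g)$ with $g \in G \setminus \mathbf{Z}(G)$; then $\mathbf{Z}(G) \subseteq H$ while $g \in H \setminus \mathbf{Z}(G)$, so the containment is proper and Theorem~\ref{thm: Mob}(1) applies. A non-central element center contained in $H$ is precisely an element of $\mathcal{Z}(G)$ contained in $H$ (note $\mathbf{Z}(x) = \mathbf{Z}(G)$ would force $x \in \mathbf{Z}(x) = \mathbf{Z}(G)$), so writing $n$ for their number and substituting $\mu(Z) = -1$ into the congruence $\sum_{Z \in \mathcal{Z}(G),\, Z \subseteq H} \mu(Z) \equiv -1 \bmod p$ yields $-n \equiv -1 \bmod p$, i.e.\ $n \equiv 1 \bmod p$. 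For item (2) I would take $H = \mathbf{Z}(g)$ with $g \in G \setminus \mathbf{Z}(G)$ and first check $H \subsetneq G$ --- otherwise $\mathbf{C}_G(g) \subseteq \mathbf{Z}(G)$, impossible since $g \in \mathbf{C}_G(g) \setminus \mathbf{Z}(G)$ --- so that Theorem~\ref{thm: Mob}(2) applies. Every $C = \mathbf{C}_G(h) \in \mathcal{C}(G)$ satisfies $\mathbf{Z}(C) = \mathbf{C}_G(\mathbf{C}_G(h)) = \mathbf{Z}(h) \in \mathcal{Z}(G)$, hence $\mu(\mathbf{Z}(C)) = -1$; substituting into $\sum_{C \in \mathcal{C}(G),\, H \subseteq C} \mu(\mathbf{Z}(C)) \equiv -1 \bmod p$ then gives that the number of proper element centralizers containing $H$ is $\equiv 1 \bmod p$.

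There is essentially no hard step here, since the corollary is a direct specialization of Theorem~\ref{thm: Mob}; the genuine content lies in the earlier M\"obius computations (Proposition~\ref{prop: p-Mob} and Theorem~\ref{thm: Mob}). The only points that warrant a moment's care are the two hypothesis checks for that theorem --- that $H$ properly contains $\mathbf{Z}(G)$ in part (1), and that $H$ is properly contained in $G$ in part (2) --- both of which follow from $G$ being nonabelian and $H$ being built from a noncentral element, together with the bookkeeping identity $\mathbf{Z}(\mathbf{C}_G(h)) = \mathbf{Z}(h)$ used in part (2).
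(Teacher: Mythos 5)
Your proposal is correct and follows the paper's own route: observe that in an $F$-group every noncentral element center $Z$ has $\mu(Z)=-1$, then apply Theorem~\ref{thm: Mob}(1) to $H\in\mathcal{C}(G)$ and Theorem~\ref{thm: Mob}(2) to $H\in\mathcal{Z}(G)$. The extra checks you include (that $\mathbf{Z}(G)\subsetneq\mathbf{C}_G(g)$, that $\mathbf{Z}(g)\subsetneq G$, and that $\mathbf{Z}(\mathbf{C}_G(h))=\mathbf{Z}(h)$) are details the paper leaves implicit, not a different argument.
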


\begin{proof}
Note that in an $F$-group, $\mu(Z) = -1$ for every non-central element center $Z$.  Applying Theorem \ref{thm: Mob} (1) to $H \in \mathcal{C}(G)$ and Theorem \ref{thm: Mob} (2) to $H \in \mathcal{Z}(G)$
the result follows.
\end{proof}

We pause for a moment to consider the relevance of our results on element centralizers and element centers to some graphs of groups related to the commuting graph. 

The commuting graph of a group $G$, which we will denote by $\mathfrak{G}(G)$, is a graph with vertex set $G \setminus \mathbf{Z}(G)$ and distinct elements $x,y$ have an edge between them if and only if $xy=yx$.  The commuting graph was probably introduced by Bertram (\cite{bertram}), and has been studied extensively.  Also of interest is the graph $\mathfrak{G}^*(G)$ which is the 
subgraph of $\mathfrak{G}(G)$ induced by a transversal for $\mathbf{Z}(G)$ in $G$.  This graph was introduced in $\cite{d2n}$.

In \cite{centgraph}, for a group $G$, we define the centralizer graph $\Gamma_{\mathcal{Z}} (G)$ to be the graph whose vertex set is $\mathcal{Z}(G) = \{ \mathbf{Z}(g) \,\, | \,\, g \in G \setminus \mathbf{Z}(G) \}$ and there is an edge between distinct $\mathbf{Z}(g)$ and $\mathbf{Z}(h)$ if $\mathbf{Z}(g) \subseteq \mathbf{C}_G (h)$.  This holds if and only if $\mathbf{Z}(h) \subseteq \mathbf{C}_G (g)$, by duality.  Hence this graph makes sense as an undirected graph, and we may also define it instead on the vertex set $\mathcal{C}(G) = \{ \mathbf{C}_G(g) \,\, | \,\, g \in G \setminus \mathbf{Z}(G) \}$.

In \cite{centgraph} we observe that the centralizer graph $\Gamma_{\mathcal{Z}}(G)$ is obtained from either the commuting graph $\mathfrak{C}(G)$ or the graph $\mathfrak{C}^*(G)$ by identifying vertices via the equivalence relation outlined in this paper: $x \sim y$ if and only if $\mathbf{C}_G(x) = \mathbf{C}_G(y)$.  

The following collects information on the neighbors and degrees of vertices in these three graphs.  We use the notation $N(x)$ for the neighbors of a vertex $x$ in a graph.
%\eject

\begin{proposition}
Let $G$ be a finite group and let $\mathfrak{G}(G)$ be the commuting graph of $G$, $\mathfrak{G}^*(G)$ be the subgraph of $\mathfrak{G}(G)$ induced by a transversal $T$ for $\mathbf{Z}(G)$ over $G$, and $\Gamma_{\mathcal{Z}}(G)$ be the centralizer graph.  We have
\begin{enumerate}
\item If $x$ is a vertex of $\mathfrak{G}(G)$, then 
\[N(x) = (\mathbf{C}_G(x) \setminus \{x\}) \setminus \mathbf{Z}(G).\]
Thus, the degree of $x$ in $\mathfrak{G} (G)$ is $|{\bf C}_G (x)| - |\mathbf{Z}(G)| - 1$.
\item If $x$ is a vertex of $\mathfrak{G}^*(G)$, then 
\[N(x) = (\mathbf{C}_G(x) \cap T) \setminus (\{x\} \cup (\mathbf{Z}(G) \cap T)).\]
Thus, the degree of $x$ in $\mathfrak{G}^* (G)$ is $|\mathbf{C}_G (x):\mathbf{Z}(G)| - 2$.
\item If $\mathbf{Z}(x)$ is a vertex of $\Gamma_{\mathcal{Z}}(G)$, then 
\[N(\mathbf{Z}(x)) = \{ \mathbf{Z}(y) \in \mathcal{Z}(G) \mid \mathbf{Z}(y) \subseteq \mathbf{C}_G(x) \} \setminus \{ \mathbf{Z}(x) \}.\]
\end{enumerate}
\end{proposition}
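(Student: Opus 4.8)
The three parts are all proved by unwinding the definitions of the graphs; the only arithmetic subtlety is that the centralizers in play are unions of cosets of $\mathbf{Z}(G)$. For part (1), I would argue directly from the definition of $\mathfrak{G}(G)$: a vertex $y$ (so $y \in G \setminus \mathbf{Z}(G)$ and $y \ne x$) is adjacent to $x$ exactly when $xy = yx$, i.e. $y \in \mathbf{C}_G(x)$. Hence $N(x)$ consists of the elements of $\mathbf{C}_G(x)$ other than $x$ that are noncentral, which is precisely $(\mathbf{C}_G(x) \setminus \{x\}) \setminus \mathbf{Z}(G)$. For the degree, note that $\mathbf{Z}(G) \subseteq \mathbf{C}_G(x)$ and $x \in \mathbf{C}_G(x) \setminus \mathbf{Z}(G)$, so $\{x\}$ and $\mathbf{Z}(G)$ are disjoint subsets of $\mathbf{C}_G(x)$ of sizes $1$ and $|\mathbf{Z}(G)|$; deleting both gives $|N(x)| = |\mathbf{C}_G(x)| - |\mathbf{Z}(G)| - 1$.

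For part (2), the vertex set of $\mathfrak{G}^*(G)$ is $T \setminus \mathbf{Z}(G)$ (a transversal for $\mathbf{Z}(G)$ meets $\mathbf{Z}(G)$ in the single element $\mathbf{Z}(G) \cap T$), and adjacency is inherited from $\mathfrak{G}(G)$. Intersecting the description from part (1) with $T$ therefore yields $N(x) = (\mathbf{C}_G(x) \cap T) \setminus (\{x\} \cup (\mathbf{Z}(G) \cap T))$. The one nonformal step is the count: since $\mathbf{Z}(G) \le \mathbf{C}_G(x)$, the subgroup $\mathbf{C}_G(x)$ is a union of cosets of $\mathbf{Z}(G)$, and $T$ contains exactly one representative from each, so $|\mathbf{C}_G(x) \cap T| = |\mathbf{C}_G(x) : \mathbf{Z}(G)|$ — this is the coset bookkeeping already used in Lemma \ref{lem: cen_divides_stars}. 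Since $x$ and the central representative $\mathbf{Z}(G) \cap T$ are two distinct elements of $\mathbf{C}_G(x) \cap T$, we subtract $2$ to obtain $|N(x)| = |\mathbf{C}_G(x) : \mathbf{Z}(G)| - 2$.

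For part (3), this is pure definition-chasing: two distinct vertices $\mathbf{Z}(x), \mathbf{Z}(y) \in \mathcal{Z}(G)$ of $\Gamma_{\mathcal{Z}}(G)$ are joined by an edge iff $\mathbf{Z}(y) \subseteq \mathbf{C}_G(x)$ — one of the equivalent edge conditions recorded when $\Gamma_{\mathcal{Z}}(G)$ was introduced — so $N(\mathbf{Z}(x)) = \{\mathbf{Z}(y) \in \mathcal{Z}(G) \mid \mathbf{Z}(y) \subseteq \mathbf{C}_G(x)\} \setminus \{\mathbf{Z}(x)\}$.

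I do not anticipate a genuine obstacle here; this proposition is really just a packaging of earlier observations. The only care needed is the elementary bookkeeping in parts (1) and (2): confirming that $x$ is noncentral (hence is not among the deleted central elements) and that, in (2), $\mathbf{C}_G(x)$ splits into $\mathbf{Z}(G)$-cosets so that the transversal count is exactly $|\mathbf{C}_G(x) : \mathbf{Z}(G)|$, with $x$ and the central representative genuinely distinct.
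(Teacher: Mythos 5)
Your proposal is correct, and it is exactly the routine definition-unwinding (plus the $\mathbf{Z}(G)$-coset bookkeeping for the transversal count in part (2)) that the paper has in mind: the paper states this proposition without proof, treating it as immediate from the definitions of $\mathfrak{G}(G)$, $\mathfrak{G}^*(G)$, and $\Gamma_{\mathcal{Z}}(G)$. In particular you correctly handle the two points that need any care at all — that a vertex $x$ is noncentral, and that $|\mathbf{C}_G(x)\cap T| = |\mathbf{C}_G(x):\mathbf{Z}(G)|$ because $\mathbf{Z}(G)\le\mathbf{C}_G(x)$ — so nothing is missing.
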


If $\mathbf{Z}(G)$ is a nontrivial $p$-group, then the degree of every vertex in $\mathfrak{G}(G)$ is congruent to $-1$ modulo $p$ since both $|{\bf C}_G (x)|$ and $|\mathbf{Z}(G)|$ will be divisible by $p$.  When $G/\mathbf{Z}(G)$ is a nontrivial $p$-group, the degree of every vertex in $\mathfrak{G}^*(G)$ is congruent to $-2$ modulo $p$.  For the centralizer graph, $\Gamma_{\mathcal{Z}}(G)$, it is not true in general that a nonabelian $p$-group $G$ will have the degrees of its vertices all congruent to the same value modulo $p$.  One can find many examples, and in fact SmallGroup$(3^7,261)$, which we mention at the end of this paper, is one example. In the case of an $F$-group, however, we do obtain an analogous result for the centralizer graph.

\begin{corollary}\label{cor: cent_graph_degree}
Suppose $G$ is a nonabelian $F$-group that is a $p$-group.  Every vertex of the centralizer graph $\Gamma_{\mathcal{Z}}(G)$ has degree congruent to $0$ modulo $p$.
\end{corollary}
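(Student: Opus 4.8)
The plan is to reduce the degree computation to the counting statement in Corollary~\ref{cor: non_ab_F-Gp}. First I would fix a vertex $\mathbf{Z}(x)$ of $\Gamma_{\mathcal{Z}}(G)$, where $x \in G \setminus \mathbf{Z}(G)$, and recall from the proposition immediately preceding this corollary that its neighbor set is
$$N(\mathbf{Z}(x)) = \{\, \mathbf{Z}(y) \in \mathcal{Z}(G) \mid \mathbf{Z}(y) \subseteq \mathbf{C}_G(x) \,\} \setminus \{\mathbf{Z}(x)\}.$$
Since $\mathbf{Z}(x) = \mathbf{C}_G(\mathbf{C}_G(x)) \subseteq \mathbf{C}_G(x)$ (using Corollary~\ref{cor: abelian_cent}) and $\mathbf{Z}(x) \in \mathcal{Z}(G)$ because $x \notin \mathbf{Z}(G)$, the element center $\mathbf{Z}(x)$ itself is one of the $\mathbf{Z}(y)$ in the set being intersected, and it is the only one removed. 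Hence
$$\deg_{\Gamma_{\mathcal{Z}}(G)}(\mathbf{Z}(x)) = \bigl|\{\, Z \in \mathcal{Z}(G) \mid Z \subseteq \mathbf{C}_G(x) \,\}\bigr| - 1.$$

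Next I would observe that $\mathbf{C}_G(x)$ is a proper subgroup of $G$ (again because $x \notin \mathbf{Z}(G)$), so $\mathbf{C}_G(x) \in \mathcal{C}(G)$, and therefore Corollary~\ref{cor: non_ab_F-Gp}(1) applies with $H = \mathbf{C}_G(x)$: the number of non-central element centers contained in $\mathbf{C}_G(x)$ is congruent to $1$ modulo $p$. Substituting this into the displayed formula yields $\deg_{\Gamma_{\mathcal{Z}}(G)}(\mathbf{Z}(x)) \equiv 1 - 1 = 0 \bmod p$, which is the claim. One could instead cite Theorem~\ref{thm: Mob}(1) directly, using that $\mathbf{Z}(G) \subsetneq \mathbf{C}_G(x)$ since $x \in \mathbf{C}_G(x) \setminus \mathbf{Z}(G)$, together with the fact that $\mu(Z) = -1$ for every $Z \in \mathcal{Z}(G)$ in an $F$-group — which holds because the only element center properly below such a $Z$ is $\mathbf{Z}(G)$, so $\mu(Z) = -\mu(\mathbf{Z}(G)) = -1$.

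There is essentially no hard step here: the real content is already packaged in Theorem~\ref{thm: Mob} and Corollary~\ref{cor: non_ab_F-Gp}. The only point requiring a moment's care is aligning the index set appearing in the degree formula of the preceding proposition with the index set in Corollary~\ref{cor: non_ab_F-Gp}, and verifying that the ``$-1$'' in the degree formula is exactly accounted for by the vertex $\mathbf{Z}(x)$ itself being one of the non-central element centers contained in $\mathbf{C}_G(x)$ — so the count ``$\equiv 1 \bmod p$'' becomes ``$\equiv 0 \bmod p$'' after deleting that one vertex. I would state explicitly that $\mathbf{C}_G(x) \in \mathcal{C}(G) \subseteq \mathfrak{C}(G) \setminus \{G\}$ to make the citation of the corollary (or of Theorem~\ref{thm: Mob}) unambiguous.
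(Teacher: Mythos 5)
Your proposal is correct and follows essentially the same route as the paper: the paper's proof likewise notes that the degree of the vertex $\mathbf{Z}(x)$ equals the number of non-central element centers contained in $\mathbf{C}_G(x)$ other than $\mathbf{Z}(x)$ itself, and then cites Corollary~\ref{cor: non_ab_F-Gp}(1). Your write-up just makes the bookkeeping (that $\mathbf{Z}(x) \subseteq \mathbf{C}_G(x)$ accounts for the subtracted vertex, and that $\mathbf{C}_G(x) \in \mathcal{C}(G)$) explicit, which is fine.
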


\begin{proof}
The result follows from Corollary \ref{cor: non_ab_F-Gp} (1) since the degree of vertex $\mathbf{Z}(x) \in \mathcal{Z}(G)$ is the number of non-central element centers contained in $\mathbf{C}_G(x)$ other than $\mathbf{Z}(x)$.
\end{proof}

We close by showing in Figure \ref{fig: Z-poset} the M\"obius function values on the poset $\mathcal{Z}(G) \cup \{ \mathbf{Z}(G) \}$ for a $3$-group, $G$, that is not an $F$-group.  One can verify that Theorem \ref{thm: Mob} holds in this example.  Take, for instance, $H = G$ in item (1).  The non-central element centers contained in $G$ would consist of all of the non-central element centers.  The sum of all of those M\"obius function values is $5$, which is $-1$ modulo $3$.  For another example, let $H$ be one of the element centers with $\mu(H) = 3$.  Adding up all of the M\"obius values for the non-central element centers contained in $H$ (which includes $H$ itself) yields $-1$, which is again $-1$ modulo $3$.

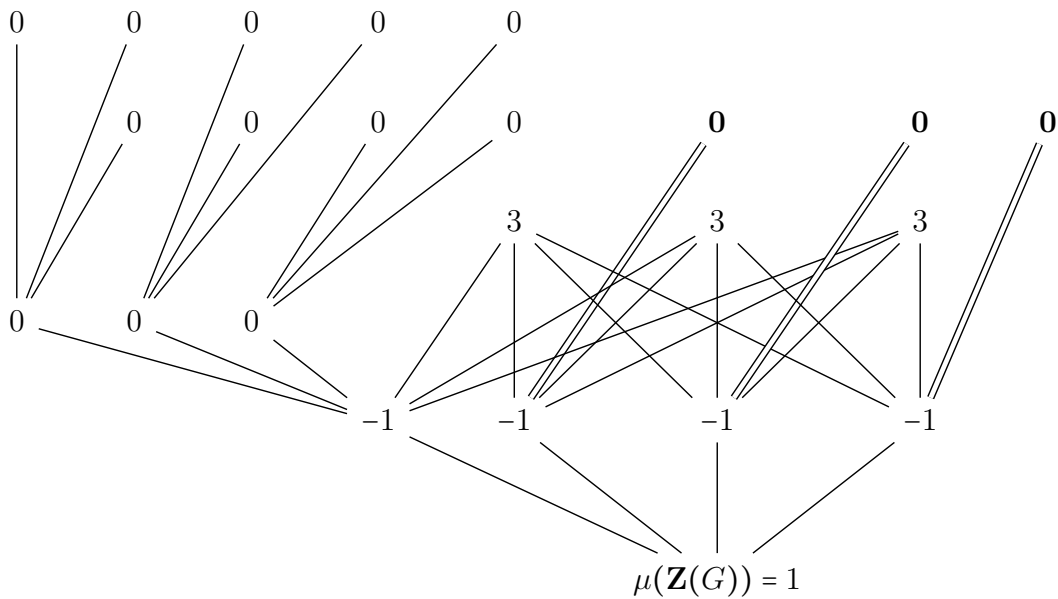
\begin{figure}[H]
\[\begin{tikzcd}
	0 & 0 & 0 & 0 & 0 \\
	& 0 & 0 & 0 & 0 & {\textbf{0}} & {\textbf{0}} & {\textbf{0}} \\
	&&&& {3} & {3} & {3} \\
	0 & 0 & 0 \\
	&&& {-1} & {-1} & {-1} & {-1} \\
	\\
	&&&&& {\mu(\mathbf{Z}(G))=1}
	\arrow[no head, from=1-1, to=4-1]
	\arrow[no head, from=1-2, to=4-1]
	\arrow[no head, from=1-3, to=4-2]
	\arrow[no head, from=1-4, to=4-2]
	\arrow[no head, from=1-5, to=4-3]
	\arrow[no head, from=2-2, to=4-1]
	\arrow[no head, from=2-3, to=4-2]
	\arrow[no head, from=2-4, to=4-3]
	\arrow[no head, from=2-5, to=4-3]
	\arrow[Rightarrow, no head, from=2-6, to=5-5]
	\arrow[no head, from=3-5, to=5-4]
	\arrow[no head, from=3-5, to=5-5]
	\arrow[no head, from=3-5, to=5-7]
	\arrow[no head, from=3-6, to=5-4]
	\arrow[no head, from=3-6, to=5-5]
	\arrow[no head, from=3-6, to=5-6]
	\arrow[no head, from=3-7, to=5-4]
	\arrow[no head, from=3-7, to=5-6]
	\arrow[no head, from=3-7, to=5-7]
	\arrow[no head, from=4-1, to=5-4]
	\arrow[no head, from=4-2, to=5-4]
	\arrow[no head, from=4-3, to=5-4]
	\arrow[no head, from=5-4, to=7-6]
	\arrow[no head, from=5-5, to=3-7]
	\arrow[no head, from=5-5, to=7-6]
	\arrow[Rightarrow, no head, from=5-6, to=2-7]
	\arrow[no head, from=5-6, to=3-5]
	\arrow[no head, from=5-6, to=7-6]
	\arrow[Rightarrow, no head, from=5-7, to=2-8]
	\arrow[no head, from=5-7, to=3-6]
	\arrow[no head, from=5-7, to=7-6]
\end{tikzcd}\]
\caption{The M\"obius function, $\mu$, defined on the poset $\mathcal{Z}(G) \cup \{ \mathbf{Z}(G) \}$ for a group $G$ isomorphic to SmallGroup$(3^7,261)$.  The bold-faced $\textbf{0}$ nodes each represent $27$ distinct nodes containing the node below.  Note that the sum of $\mu$ over the $100$ total non-minimal nodes is $5$, which is $-1$ modulo $3$.}
\label{fig: Z-poset}
\end{figure}

\noindent
{\bf Data Availability:}

There is no data associated with this paper.


\begin{thebibliography}{99}
\bibitem{bertram}  E. A. Bertram, Some applications of graph theory to finite groups, {\it Discrete Math.} {\bf 44} (1983), 31-43.

\bibitem{bry} R. Bryant, Groups with the minimal condition on centralizers. {\it J. Alg.} {\bf 60} (1979), 371-333.

\bibitem{coc} W. Cocke and R. McCulloch, The {Chermak--Delgado} measure as a map on posets, {\it Archiv. Math. (Basel)}, {\bf 123} (2024), 241-251. 

\bibitem{DaZi} F. Dalla Volta and G. Zini, On two M\"obius functions for a finite non-solvable group, {\it Comm. Algebra} {\bf 49} (2021), 4565-4576.

\bibitem{DaLu} F. Dalla Volta,  and A. Lucchini, The A-M\"obius function of a finite group, {\it Ars Math. Contemp.} {\bf 23} (2023),  no. 3, Paper No. 8, 14 pp.

\bibitem{fal} M. De Falco, F. De Giovanni and C. Musella, Groups with large centralizer subgroups. {\it Note Mat.} {\bf 29} (2009), 21-28.

\bibitem{del} C. Delizia, H. Dietrich, P. Moravec, and C. Nicotera, Groups in which every non-abelian subgroup is self-centralizing. {\it J. Alg.} {\bf 462} (2016), 23–36.

\bibitem{DHJ} S. Dolfi, M. Herzog and E. Jabara, Finite groups whose noncentral commuting elements have centralizers of equal size, {\it Bull. Aust. Math. Soc.} {\bf 82} (2010), 293-304.

\bibitem{dun} A.J. Duncan, I. V. Kazatchkov and V. N. Remeslennikov, Centraliser dimension and universal classes of groups. {\it Siberian Electronic Math. Reports 3} (2006), 197-215.
	
\bibitem{gaschutz} W. Gasch\"utz., Die Eulersche Funktion endlicher aufl\"osbarer Gruppen, {\it Illinois J. Math.} {\bf 3} (1959), 469-476.

\bibitem{HaAm} S. Haji and S. M. J. Amiri, On groups covered by finitely many centralizers and domination number of the commuting graph, {\it Comm. Algebra} {\bf 47} (2019), 4641-4653.

\bibitem{hall} P. Hall, The Eulerian functions of a group, {\it Quart. J. Math.} {\bf 7} (1936), 134-151.

\bibitem{HIO} T. Hawkes, I. M. Isaacs, and M. \"Ozaydin, On the M\"obius function of a finite group, {\it Rocky Mountain J. Math.} {\bf 19} (1989), 1003-1034.

\bibitem{ito} N.  It\^o, On finite groups with given conjugate types I, {\it Nagoya Math. J.} {\bf 6} (1953), 17-28.

\bibitem{kho_21} K. Khoramshahi and M. Zarrin, Groups with the same number of centralizers. {\it J. Algebra Appl.} {\bf 20} (2021), Paper No. 2150012, 6 pp.
	
\bibitem{KrTh} C. Kratzer and J. Th\'evenaz, Fonction de M\`obius d'un groupe fini
et anneau de Burnside, {\it Comment. Math. Helv.} {\bf 59} (1984), 425-438.
	
\bibitem{centgraph} M. L. Lewis and R. McCulloch, The commuting graph and a graph associated with centralizers, submitted for publication.   	
https://doi.org/10.48550/arXiv.2511.11926

\bibitem{cover} M. L. Lewis and R. McCulloch, Covering by centralizers.  To appear in Monatshefte f{\"u}r Mathematik.  https://doi.org/10.1007/s00605-026-02165-7
%https://doi.org/10.48550/arXiv.2512.02211

\bibitem{pahlings} H. Pahlings, On the M\"obius function of a finite group, {\it Arch. Math. (Basel)} {\bf 60} (1993), 7-14.

\bibitem{rah_25} S. Rahimirad and M. Zarrin, The probability that two elements of a group have the same centralizers. {\it Int. J. Group Theory} {\bf 14} (2025), 223–233. 
    
\bibitem{reb} J. Rebmann, $F$-Gruppen, {\it Archiv. Math. (Basel)} {\bf 22} (1971), 225-230.

\bibitem{sch70} R. Schmidt, Zentralisatorverbände endlicher Gruppen (Centraliser Lattices of Finite Groups), {\it Rend. Dem. Math. Univ. Padova} {\bf 44} (1970), 97–131.

\bibitem{sch_book} R. Schmidt, ``Subgroup Lattices of Groups (De Gruyter Expositions in Mathematics Volume 14)'', Walter de Gruyter, 1994.

\bibitem{stan} R. P. Stanley, ``Enumerative Combinatorics, Volume 1'', Wadsworth \& Brooks/Cole, Pacific Grove, CA, 1986.

\bibitem{tre} J. Treur, On dual centraliser relations for group extensions and solutions of binomial equations, to appear in Journal of Algebra and its Applications.  https://doi.org/10.1142/S0219498827501076

\bibitem{d2n} J. Vahidi and A. A. Talebi, The commuting graphs on groups $D_{2n}$ and $Q_n$, {\it J. Math. Comput. Sci.} {\bf 1} (2010), 123-127. 

\bibitem{verardi} L. Verardi, Gruppi semiextraspeciali di esponente $p$, {\it Ann. Mat. Pura Appl.} {\bf 148} (1987) 131--171.
\end{thebibliography}
\end{document}